\theoremstyle{plain}
\newtheorem{theorem}{Theorem}[section]
\theoremstyle{plain}
\newtheorem{proposition}[theorem]{Proposition}
\theoremstyle{plain}
\newtheorem{lemma}[theorem]{Lemma}
\theoremstyle{plain}
\theoremstyle{plain}
\theoremstyle{plain}
\theoremstyle{definition}
\newtheorem{definition}[theorem]{Definition}
\theoremstyle{remark}
\newtheorem{remark}[theorem]{Remark}
\theoremstyle{remark}
\theoremstyle{remark}
\title[Superrigidity via quasi-cocycles]
{Superrigidity from Chevalley groups into acylindrically hyperbolic groups via quasi-cocycles.}
\author{Masato Mimura}  \thanks{The author was supported in part by KAKENHI 25800033, the Grant-in-Aid for Young Scientists (B)}
\address{Masato Mimura\\
Mathematical Institute, Tohoku University}
\email{mimura-mas@m.tohoku.ac.jp}
\date{\today}
\begin{document}

\begin{abstract}
We prove that every homomorphism from the elementary Chevalley group over a finitely generated unital commutative ring associated with a reduced irreducible classical root system of rank at least $2$, and ME analogues of such groups, into acylindrically hyperbolic groups has an absolutely elliptic image. This result provides a non-arithmetic generalization of homomorphism superrigidity of Farb--Kaimanovich--Masur and Bridson--Wade.
\end{abstract}

\keywords{Property (T); Quasi-cocycles; Elementary Chevalley groups; Acylindrically hyperbolic groups}

\maketitle

\section{Main result}
The celebrated \textit{Farb--Kaimanovich--Masur superrigidity theorem} \cite{KaimanovichMasur}, \cite{FarbMasur} states that every homomorphism from an (irreducible) higher rank lattice into $\mathrm{MCG}(\Sigma_g)$, the mapping class group of a closed oriented surface $\Sigma_g$ of genus $g$, has \textit{finite} image. Later, Bridson and Wade \cite{BridsonWade} showed that the same superrigidity remains true if the target group is replaced with $\mathrm{Out}(F_N)$, the outer automorphism group of a (non-abelian) free group $F_N$ of finite rank $N$. In an unpublished manuscript of \cite{Mimuraup}, the present author obtained a similar homomorphism superrigidity from \textit{$($commutative$)$ universal lattices} and \textit{symplectic universal lattices}, that means, groups of the form $\mathrm{SL}(n,\mathbb{Z}[x_1,\ldots ,x_k])$ with $n\geq 3$, and $\mathrm{Sp}(2n,\mathbb{Z}[x_1,\ldots ,x_k])$ with $n\geq 2$, where $k$ finite. 

In this paper, we present a full generalization of this homomorphism superrigidity, as follows.

\begin{theorem}[Main Theorem]\label{theorem=main}
Let $\Phi$ be a reduced irreducible classical root system of rank at least $2$. Let $A$ be a finitely generated, unital, commutative, and associative ring. Let $\Gamma$ and $\Lambda$ be as follows:
\begin{itemize}
  \item The group $\Gamma$ is a quotient of a finite index subgroup of the $($simply connected$)$ elementary Chevalley group $\mathrm{E}(\Phi,A)$.
  \item The group $\Lambda$ is measure equivalent to $\Gamma$ with the $L_2$-integrability condition on a corresponding $\mathrm{ME}$ cocycle from $\Gamma$ to $\Lambda$. $($See the proof of Proposition~$\ref{proposition=ME}$ for the definition of the $L_2$-integrability condition.$)$
\end{itemize}

Then the following hold true.
\begin{enumerate}[$(i)$]
 \item For every acylindrically hyperbolic group $G$, every group homomorphism from $\Lambda$ into $G$ has an $\mathrm{absolutely}$ $\mathrm{elliptic}$ image $H$. That means, every acylindrical $G$-action by isometries on a $($Gromov-$)$hyperbolic geodesic space has a bounded $H$-orbit.
 \item Furthermore, if $G$ is of the form $\mathrm{MCG}(\Sigma_g)$ or $\mathrm{Out}(F_N)$ for $g$, $N$ finite, then every group homomorphism from $\Lambda$ into $G$ has a finite image.
\end{enumerate}
\end{theorem}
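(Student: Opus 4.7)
The plan is to derive a contradiction from the assumption that $f\colon \Lambda \to G$ has non-absolutely-elliptic image $H$, by exhibiting a nontrivial quasi-$1$-cocycle on $\Lambda$ into a unitary representation, and then showing that $\Lambda$ admits no such object. Three ingredients enter: (a) a supply of nontrivial quasi-cocycles coming from non-elementary acylindrical actions of $G$; (b) a strong Kazhdan-type property of $\mathrm{E}(\Phi,A)$ and its finite-index-then-quotient descendants that annihilates every quasi-$1$-cocycle into a unitary representation; and (c) a measure-equivalence transfer that carries this annihilation property from $\Gamma$ to $\Lambda$, for which the $L_2$-integrability of the coupling is the crucial input.

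For ingredient (a), I would invoke the by-now-standard construction of Bestvina--Bromberg--Fujiwara, Hamenst\"adt, and Hull--Osin: whenever a subgroup $H\leq G$ acts non-elementarily via some acylindrical $G$-action on a hyperbolic geodesic space, the second reduced bounded cohomology $\widetilde{H}^2_b(H,\ell^2(H))$ is infinite-dimensional, and in particular there exists a quasi-$1$-cocycle $q\colon G\to \pi$ into a unitary $G$-representation $\pi$ whose restriction to $H$ is at infinite distance from every genuine $1$-cocycle. Thus if $H=f(\Lambda)$ were \emph{not} absolutely elliptic, $q\circ f$ would be a non-cohomologous-to-a-cocycle, unbounded quasi-$1$-cocycle on $\Lambda$.

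For ingredient (b), the required property is that every quasi-$1$-cocycle $\Gamma\to\pi$ into a unitary representation is at bounded distance from zero; this is strictly stronger than Kazhdan's property (T). It is known for $\mathrm{SL}_n(\mathbb{Z})$ with $n\geq 3$ (Burger--Monod, Monod--Shalom) and for the commutative and symplectic universal lattices treated in \cite{Mimuraup}. The generalization to arbitrary reduced irreducible classical $\Phi$ of rank $\geq 2$ and any finitely generated unital commutative ring $A$ should follow by combining Stepanov--Vavilov-style bounded generation of $\mathrm{E}(\Phi,A)$ by the elementary root subgroups with a Shalom-type reduction exploiting the commuting pairs of root subgroups supplied by the rank-$\geq 2$ hypothesis; passage to finite-index subgroups and to quotients is automatic. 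Ingredient (c) is the most delicate step, and is what constrains the hypotheses of the theorem: given the $L_2$-integrable ME coupling, one induces the unitary $\Lambda$-representation $\pi$ to a unitary $\Gamma$-representation $\mathrm{Ind}\,\pi$ and transports $q\circ f$ to a quasi-$1$-cocycle on $\Gamma$. The $L_2$-integrability is precisely what ensures the induced object lies in a Hilbert space and that boundedness descends back across the coupling, following the Monod--Shalom / Shalom ME-induction formalism. Combining (a), (b), (c) yields the contradiction proving part~(i).

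For part~(ii), once $H=f(\Lambda)$ is known to be absolutely elliptic in $\mathrm{MCG}(\Sigma_g)$ or $\mathrm{Out}(F_N)$, I would invoke the structure of such subgroups: in $\mathrm{MCG}$, absolutely elliptic subgroups are virtually abelian by the Birman--Lubotzky--McCarthy analysis via the curve complex and subsurface projections; in $\mathrm{Out}(F_N)$, an induction on the complexity of free factors, using the Bestvina--Feighn--Handel and Handel--Mosher actions on free-factor/free-splitting complexes, reduces the image to virtually abelian pieces at each stage. Since $\Lambda$ inherits Property (T) from $\Gamma$ (Furman's ME-invariance of Property (T)), it has no infinite virtually abelian quotient, and $H$ must be finite.
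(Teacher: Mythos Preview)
Your overall strategy---produce a Hull--Osin/BBF quasi-cocycle on the image, pull it back to $\Lambda$, ME-induce to $\Gamma$, and contradict a strong Kazhdan-type vanishing there---is essentially the paper's. Two points, however, are genuine gaps rather than sketchiness.

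\textbf{Ingredient (b) overclaims.} You ask that \emph{every} quasi-cocycle on $\Gamma$ into \emph{any} unitary representation be bounded, i.e., full property~$(\mathrm{TT})$. For $\mathrm{E}(\Phi,A)$ with a general finitely generated commutative $A$ this is open; already whether $\mathrm{SL}_3(\mathbb{Z}[x])$ admits an unbounded quasi-\emph{homomorphism} (the trivial-coefficient case of~$(\mathrm{TT})$) is unresolved, so your ``should follow by bounded generation plus Shalom-type reduction'' cannot go through as stated. The paper proves only property~$(\mathrm{TT})/\mathrm{T}$---boundedness of quasi-cocycles into representations \emph{not containing} $1_\Gamma$---using a specific \emph{weak} bounded generation (conjugates of root unipotents are bounded products of root unipotents), not bounded generation of $\mathrm{E}(\Phi,A)$ itself, which is unknown outside type~$A$. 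Working with $(\mathrm{TT})/\mathrm{T}$ then forces you to verify in~(c) that the ME-induced $\Gamma$-representation still has no invariant vectors; the paper does this by observing that $\ell_2(H)$ is weakly mixing (as $H$ is infinite) and that weak mixing of $\sigma$ guarantees $\Omega\mathbf{Ind}_\Lambda^\Gamma\sigma\not\supseteq 1_\Gamma$. You omit this, and it is exactly why the paper separates $(\mathrm{TT})/\mathrm{T}$ from $(\mathrm{TT})_{\mathrm{wm}}$. A smaller omission in~(a): ``not absolutely elliptic'' does not yield a non-elementary $H$-action directly; Osin's trichotomy leaves the virtually-$\mathbb{Z}$ case, which must be eliminated separately via property~$(\mathrm{T})$.

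\textbf{Part (ii) for $\mathrm{MCG}(\Sigma_g)$ is incorrect as written.} Absolutely elliptic subgroups of $\mathrm{MCG}(\Sigma_g)$ need \emph{not} be virtually abelian: the stabilizer of a multicurve contains no pseudo-Anosov of $\Sigma_g$, hence no generalized loxodromic, hence is elliptic for every acylindrical $\mathrm{MCG}(\Sigma_g)$-action---yet it is essentially a product of smaller mapping class groups. The paper instead uses \cite[Theorem~8.10]{DahmaniGuirardelOsin}: if $H\leq\mathrm{MCG}(\Sigma_g)$ is not virtually abelian, some finite-index $H_0\leq H$ \emph{surjects onto} an acylindrically hyperbolic group, and one re-applies part~(i) to that surjection (using that $(\mathrm{TT})_{\mathrm{wm}}$ passes to finite-index subgroups and quotients) to force $H$ virtually abelian, hence finite by property~$(\mathrm{T})$. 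Your $\mathrm{Out}(F_N)$ outline, by contrast, is close to what the paper does via \cite{BridsonWade}.
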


\begin{remark}\label{remark=notfg}
In fact, in the assertions for $\Lambda=\Gamma$ in Theorem~\ref{theorem=main}, we can drop the assumption of the finite generation of the ring $A$. See Section~\ref{section=notfg}.
\end{remark}

We explain some terminology in the theorem above, firstly on $\Gamma$ and $\Lambda$. For (simply connected) \textit{elementary Chevalley groups}, we refer to \cite{Steinberg} and \cite{ErshovJaikinKassabov}. A basic example is the \textrm{elementary group} $\mathrm{E}(n,A)$, when $\Phi=A_{n-1}$, which is a subgroup of $\mathrm{GL}(n,A)$ generated by the elementary matrices $e_{i,j}^a$ for $1\leq i\leq n$, $1\leq j\leq n$, $i\ne j$, and $a\in A$. Here, $e_{i,j}^a$ denotes the following matrix in $\mathrm{GL}(n,A)$: $1$ on diagonal, $0$ on all but the $(i,j)$-th off-diagonal entries, and $a$ on the $(i,j)$-the entry. Observe the following commutator relation: $[e_{i,j}^{a_1}, e_{j,k}^{a_2}]=e_{i,k}^{a_1a_2}$ for $i\ne j\ne k\ne i$, where $[g,h]:=ghg^{-1}h^{-1}$. It implies that if $n\geq 3$ and if $A$ is as in Theorem~\ref{theorem=main}, then $\mathrm{E}(n,A)$ is a finitely generated group. If we let $\Phi=C_n$, then $\mathrm{E}(\Phi,A)$ is the elementary symplectic group. By setting $A=\mathbb{Z}[x_1,\ldots, x_k]$ in both cases above, we recover the case of (commutative) universal lattices and of symplectic universal lattices. (More precisely, here we use results of Suslin and Kope\u{\i}ko \cite{Kopeiko})

For the \textit{measure equivalence} (ME) and the $L_p$\textit{-integrability condition}, the reader may consult with \cite{Furmansurvey}, see Definition~2.1, Subsection~2.3.2, and Appendix~A.3 therein; we will also briefly recall it in the proof of Proposition~\ref{proposition=ME} in the present paper. A basic example of a measure equivalent pair $(\Gamma, \Lambda)$ is a pair of two lattices in a common locally compact second countable group. Moreover, if these two in the example above are irreducible higher rank lattices, then this measure equivalence is known to satisfy the $L_p$-integrability condition for all $p\in [1,\infty)$, see for instance \cite[Section~8]{BaderFurmanGelanderMonod}. Therefore, item $(ii)$ of Theorem~\ref{theorem=main} generalizes the Farb--Kaimanovich--Masur and the Bridson--Wade superrigidity of higher rank lattices, provided that the corresponding higher rank algebraic group has no rank one factor, to groups possibly with no arithmetic backgrounds.

Secondly, on $G$, recall from Dahmani--Guirardel--Osin \cite{DahmaniGuirardelOsin} and Osin \cite{Osin} that a group $G$ is said to be \textit{acylindrically hyperbolic} if $G$ admits a \textit{non-elementary} \textit{acylindrical} action by isometries on a (Gromov-)hyperbolic geodesic space. Here an isometric $G$-action on a metric space $S$ is said to be \textit{acylindrical} if for every $\epsilon >0$, there exist $R,N>0$ such that for every two points $x,y \in S$ with $d(x,y)\geq R$, there are at most $N$ elements $g\in G$ that satisfy $d(x,gx)\leq \epsilon$ and $d(y,gy)\leq \epsilon$. Recall also that an acylindrical $G$-action by isometries on a hyperbolic geodesic space $S$ is said to be \textit{non-elementary} if the limit set of $G$ on the Gromov boundary $\partial S$ contains more than $2$ points. The class of acylindrical hyperbolic groups contains $\mathrm{MCG}(\Sigma_g)$, $\mathrm{Out}(F_N)$ for $g\geq 1$ and $N\geq 2$, and  infinitely presented graphical $C(7)$-small cancellation groups \cite{GruberSisto}, which include Osajda's monsters \cite{Osajda}, \cite{Osajda2017} (for more examples, see \cite[Appendix~A]{Osin}). Note that, because every non-elementary free product is acylindrically hyperbolic, there is \textit{no} hope to expect that the image is finite in $(i)$ of Theorem~\ref{theorem=main} in general. Item $(ii)$ implies that, if we know all of the absolutely elliptic subgroups in $G$ to certain extent, then it may be possible to deduce that the image is, in fact, finite.

\begin{remark}\label{remark=Zaverse}
In \cite{BridsonWade}, Bridson and Wade defined that a group is  $\mathbb{Z}$\textit{-averse} if no finite index subgroup admits a normal subgroup that surjects onto $\mathbb{Z}$. Then, they showed that for every $\mathbb{Z}$-averse group, the homomorphism superrigidity into $\mathrm{MCG}(\Sigma_g)$ and into $\mathrm{Out}(F_N)$ holds true. Note that higher rank lattices are $\mathbb{Z}$-averse by the Margulis normal subgroup theorem together with the Matsushima vanishing theorem. We remark, on the other hand,  that many examples of $\Lambda$ in Theorem~\ref{theorem=main} are \textit{not} $\mathbb{Z}$-averse. Indeed, for $\Lambda=\Gamma =\mathrm{SL}(3,\mathbb{Z}[x])$, the kernel $K$ of the substitution map with $x=0$ surjects onto $\mathbb{Z}$. To see this, observe that the derivation cocycle $\Lambda \to (\mathrm{Mat}(3,\mathbb{Z}),+)$; $g \mapsto g'\mid_{x=0}$ is a group homomorphism if it is restricted on $K$.
\end{remark}

\begin{remark}\label{remark=hyperbolicemb}
Acylindrical hyperbolicity can be also characterized in terms of \textit{hyperbolically embedded subgroups}, see \cite[Theorem~1.2]{Osin}. The reader familiar with relative hyperbolic groups might think that the conclusion in item $(i)$  of Theorem~\ref{theorem=main} may be stated in terms of such subgroups (not in terms of the absolute ellipticity). However, Theorem~7.7 in \cite{MinasyanOsin} implies that it is impossible in general.
\end{remark}

\ 

\noindent
\textit{Convention.} Unless otherwise stated, groups are assumed to be countable and discrete.

\section{Strategy of the proof of Theorem~\ref{theorem=main}}\label{section=sketch}

To prove the main theorem, we introduce the following properties, which are motivated by property $(\mathrm{TT})$ of Monod \cite{Monod}.

\begin{definition}\label{definition=TTwm}
\begin{enumerate}[$(1)$]
  \item A group $\Gamma$ is said to have \textit{property $(\mathrm{TT})/\mathrm{T}$} (\textit{``property $\mathrm{TT}$ modulo $\mathrm{T}$"}) if all quasi-cocycles into every unitary $\Gamma$-representation that does not contain trivial representation are bounded.
  \item A group $\Gamma$ is said to have \textit{property $(\mathrm{TT})_{\mathrm{wm}}$} (\textit{``property $\mathrm{TT}$ for weakly mixing representations"}) if all quasi-cocycles into every unitary $\Gamma$-representation that is weakly mixing are bounded.
\end{enumerate}
\end{definition}

We refer the reader to  Section~\ref{section=TTwm} for the precise definitions of \textit{quasi-$(1$-$)$cocycles} and of \textit{weakly mixing} unitary representations. We also see in  Section~\ref{section=TTwm} that property $(\mathrm{TT})_{\mathrm{wm}}$ is strictly stronger than Kazhdan's property $(\mathrm{T})$, and strictly weaker than property $(\mathrm{TT})$.

The proof of Theorem~\ref{theorem=main} consists of the following two parts: 

\begin{theorem}\label{theorem=TTwm}
Let $\Lambda$ be as in Theorem~$\ref{theorem=main}$. Then $\Lambda$ has property $(\mathrm{TT})_{\mathrm{wm}}$.
\end{theorem}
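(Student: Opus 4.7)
The plan is to prove property $(\mathrm{TT})_{\mathrm{wm}}$ first for $\mathrm{E}(\Phi,A)$ itself, then adapt the argument to the quotient $\Gamma$ of its finite-index subgroup, and finally transfer the property to $\Lambda$ through the $L_2$-integrable ME-coupling. The second and third steps are largely formal once the first is in place; the first step is where the real work lies.

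To establish $(\mathrm{TT})_{\mathrm{wm}}$ for $\mathrm{E}(\Phi,A)$, I would upgrade the Ershov--Jaikin-Kassabov proof of property $(\mathrm{T})$ for this group to the level of quasi-cocycles. The rank $\geq 2$ hypothesis guarantees that around each root $\alpha \in \Phi$ one finds an $A_2$- (or $B_2$-, $G_2$-)subsystem, hence a subgroup of the form $\mathrm{E}(2,A) \ltimes A^2$ containing the root subgroup $U_\alpha \cong (A,+)$. The local input is a \emph{relative} version of $(\mathrm{TT})_{\mathrm{wm}}$ for the pair $(\mathrm{E}(2,A) \ltimes A^2, A^2)$: for any weakly mixing unitary representation $\pi$, every quasi-cocycle $q$ on the ambient group satisfies $\sup_{v \in A^2}\|q(v)\| < \infty$. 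The proof combines Shalom-type relative property $(\mathrm{T})$ for this pair with a quasi-cocycle version of the Mautner phenomenon: a diagonal element of $\mathrm{E}(2,A)$ dilates $A^2$, and averaging $q|_{A^2}$ along its orbit forces the outcome into the invariant subspace, which is zero by weak mixing, modulo the uniformly bounded defect of $q$. Combined with Steinberg's commutator relations and bounded generation of $\mathrm{E}(\Phi,A)$ by the root subgroups (available because $A$ is finitely generated), this propagates the relative boundedness to yield $(\mathrm{TT})_{\mathrm{wm}}$ on the whole group.

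The descent to $\Gamma$ amounts to checking that both finite-index subgroups and quotients preserve the property in the appropriate direction: quasi-cocycles on the quotient lift to the finite-index subgroup with weakly mixing coefficients intact, and the root subgroup / bounded generation framework survives passage to a finite-index subgroup after replacing generators by suitable powers. For the ME-transfer from $\Gamma$ to $\Lambda$, given a quasi-cocycle $q : \Lambda \to \pi$ with $\pi$ weakly mixing, one would form an induced quasi-cocycle $\tilde q : \Gamma \to \widetilde \pi$ along the ME-cocycle $\alpha : \Gamma \times \Omega \to \Lambda$, where $\widetilde \pi$ is the unitary representation on the $\Lambda$-equivariant $L^2$-sections of $\Omega$ with coefficients in $\pi$. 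The $L_2$-integrability of $\alpha$ is exactly what bounds the defect of $\tilde q$, and weak mixing of $\pi$ passes to $\widetilde\pi$. Applying $(\mathrm{TT})_{\mathrm{wm}}$ for $\Gamma$ then yields boundedness of $\tilde q$, whence that of $q$ after restriction to a fundamental domain.

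The main obstacle I expect is the relative quasi-cocycle Mautner step on $(\mathrm{E}(2,A)\ltimes A^2, A^2)$. For a genuine cocycle the contraction argument is essentially one line; for a quasi-cocycle each iteration of the contraction accumulates an additive defect, and one must arrange the averaging scheme so that weak mixing absorbs the principal term while the accumulated defect stays uniformly bounded. Quantifying this stability, and checking that it propagates without loss through the bounded-generation bootstrap and then through the ME-induction (where the $L_2$-norm of $\alpha$ controls the inherited defect), is where I expect most of the technical work to live.
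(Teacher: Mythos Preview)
Your three–step architecture matches the paper, but two of the steps contain genuine gaps, and in each case the paper's fix is precisely the point of the argument.

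\textbf{Step 1 (the group $\mathrm{E}(\Phi,A)$).} You assume that once each root subgroup carries a bounded quasi-cocycle, ``bounded generation of $\mathrm{E}(\Phi,A)$ by the root subgroups'' propagates boundedness to the whole group. But bounded generation of $\mathrm{E}(\Phi,A)$ by root elements (every element a product of uniformly boundedly many elementary root unipotents) is \emph{not known} for general $\Phi$; it is a theorem of Vaserstein for $\Phi=A_{n-1}$ and open otherwise. The paper replaces this with a much weaker, provable statement: the set $Z$ of all $\Gamma$-conjugates of elementary root unipotents is boundedly generated by the root unipotents themselves (Park--Woodburn/Kopeiko/Stepanov). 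This alone does not give global boundedness; the paper then invokes property~$(\mathrm{T})$ for $\mathrm{E}(\Phi,A)$ (Ershov--Jaikin--Kassabov) via a Shalom-type lemma: relative~$(\mathrm{TT})$ on the conjugation-invariant generating set $Z$ plus property~$(\mathrm{T})$ forces boundedness on all of $\Gamma$. Your outline has neither ingredient. Also, the relative~$(\mathrm{TT})$ input on $(\mathrm{E}(2,A)\ltimes A^2,\,A^2)$ is not obtained by a quasi-Mautner argument (your worry about defect accumulation is justified, and for a general finitely generated ring $A$ there is no dilating diagonal in $\mathrm{E}(2,A)$ to contract with); it comes for free from Ozawa's theorem that relative~$(\mathrm{T})$ for an abelian normal subgroup upgrades to relative~$(\mathrm{TTT})$, hence relative~$(\mathrm{TT})$.

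\textbf{Step 3 (the ME transfer).} You assert that weak mixing of $\pi$ passes to the ME-induced representation $\widetilde\pi$. The paper does \emph{not} claim this; it only uses that weak mixing of $\pi$ forces $\widetilde\pi\not\supseteq 1_\Gamma$, which is what Furman's argument gives. For this reason the paper proves the strictly stronger property $(\mathrm{TT})/\mathrm{T}$ for $\Gamma$ (quasi-cocycles bounded for \emph{every} $\pi\not\supseteq 1$), so that $\widetilde\pi\not\supseteq 1_\Gamma$ is enough to conclude. If you only establish $(\mathrm{TT})_{\mathrm{wm}}$ on $\Gamma$, you must justify that $\widetilde\pi$ is itself weakly mixing, which you have not done. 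The two-tier structure $(\mathrm{TT})/\mathrm{T}$ on $\Gamma$ $\Rightarrow$ $(\mathrm{TT})_{\mathrm{wm}}$ on $\Lambda$ is not an accident of exposition; it is how the induction step closes.
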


\begin{proposition}\label{proposition=HullOsin}
Let $\Lambda$ be a group. If $\Lambda$ has property $(\mathrm{TT})_{\mathrm{wm}}$, then every homomorphism from $\Lambda$ into an acylindrically hyperbolic group has an absolutely elliptic image.
\end{proposition}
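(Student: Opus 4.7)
The plan is a contrapositive argument combining Osin's classification of acylindrical actions on hyperbolic spaces with a Hull--Osin-type construction of unbounded quasi-cocycles. Let $\phi \colon \Lambda \to G$ be a homomorphism into an acylindrically hyperbolic group, set $H := \phi(\Lambda)$, and let $G$ act acylindrically on a hyperbolic geodesic space $X$. I wish to show that $H$ has a bounded orbit on $X$. Since acylindricity passes to subgroups, the induced action of $H$ on $X$ is acylindrical, so Osin's trichotomy (Theorem~1.1 of \cite{Osin}) implies that exactly one of the following holds: (a) $H$ has bounded orbits; (b) $H$ is virtually infinite cyclic and contains a loxodromic element (the \emph{lineal} case); or (c) $H$ contains infinitely many independent loxodromic elements, so $H$ acts on $X$ non-elementarily (\emph{general type}). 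The first is the desired conclusion, so the task reduces to contradicting (b) and (c).

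For the lineal case, $H$ contains an infinite cyclic subgroup $H_{0}$ of finite index, and therefore $\Lambda_{0} := \phi^{-1}(H_{0})$ is a finite-index subgroup of $\Lambda$ that surjects onto $\mathbb{Z}$. On the other hand, property $(\mathrm{TT})_{\mathrm{wm}}$ implies Kazhdan's property $(\mathrm{T})$, as discussed in Section~\ref{section=TTwm}, and $(\mathrm{T})$ is inherited by finite-index subgroups. Hence $\Lambda_{0}$ has $(\mathrm{T})$, its abelianization is finite, and no surjection onto $\mathbb{Z}$ can exist, contradicting the composition $\Lambda_{0} \twoheadrightarrow H_{0} \cong \mathbb{Z}$.

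For the general type case, the action of $H$ on $X$ is itself a non-elementary acylindrical action, so $H$ is acylindrically hyperbolic. Here I would invoke an existence theorem for unbounded quasi-cocycles for acylindrically hyperbolic groups---originating in Bestvina--Fujiwara and developed through Hamenst\"adt, Mineyev--Monod--Shalom, and Hull--Osin---to produce an unbounded quasi-cocycle $c \colon H \to \ell^{2}(H)$ with values in the left regular representation $\lambda_{H}$. Precomposition with $\phi$ yields a quasi-cocycle $c \circ \phi \colon \Lambda \to \ell^{2}(H)$ for the inflated representation $\lambda_{H} \circ \phi$, and this composition remains unbounded because $\phi(\Lambda) = H$ exhausts the domain on which $c$ is unbounded. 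Since $H$ is infinite, $\lambda_{H}$ is a mixing $H$-representation; and since any finite-dimensional $\Lambda$-invariant subspace under $\lambda_{H} \circ \phi$ is automatically a finite-dimensional $H$-invariant subspace under $\lambda_{H}$, the inflated representation $\lambda_{H} \circ \phi$ is weakly mixing as a $\Lambda$-representation. Thus $c \circ \phi$ contradicts property $(\mathrm{TT})_{\mathrm{wm}}$.

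The hard part is the general type case: one needs a geometric construction converting the non-elementary acylindrical action of $H$ on $X$ into an unbounded $2$-bounded-cohomology class with coefficients in a unitary representation, which is exactly the content of Hull--Osin's work. The remaining steps are bookkeeping: Osin's trichotomy, the inheritance of $(\mathrm{T})$ to $\Lambda_{0}$, and the observation that mixing (and hence weak mixing) of a representation persists under inflation through any surjection. Taking the target to be $\ell^{2}(H)$ is the natural choice precisely because it trivializes the weak-mixing check after pullback.
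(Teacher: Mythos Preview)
Your proof is correct and follows essentially the same route as the paper: Osin's trichotomy for the image $H$, elimination of the virtually-$\mathbb{Z}$ case via property $(\mathrm{T})$, and elimination of the non-elementary case via the Hamenst\"adt/Hull--Osin unbounded quasi-cocycle into $\ell_2(H)$ together with the weak mixing of $\lambda_H$. The only cosmetic difference is that the paper phrases the final contradiction as ``$H$ has $(\mathrm{TT})_{\mathrm{wm}}$'' (invoking that the property passes to quotients), whereas you explicitly pull the quasi-cocycle back through $\phi$ and verify weak mixing on $\Lambda$; these are the same argument unwound.
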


We will prove Theorem~\ref{theorem=TTwm} in Section~\ref{section=TTwmforE}, and Proposition~\ref{proposition=HullOsin} and Theorem~\ref{theorem=main} in Section~\ref{section=ah}. For the proof of Theorem~\ref{theorem=TTwm}, more precisely, we firstly prove Theorem~\ref{theorem=TT/T}, which states that the $\Gamma$ (elementary Chevelley groups in our settings) possesses property $(\mathrm{TT})/\mathrm{T}$, and then deduce property $(\mathrm{TT})_{\mathrm{wm}}$ for $\Lambda$ by the $L_2$-induction process (item $(2)$ in Proposition~\ref{proposition=ME}). 

The proof of Theorem~\ref{theorem=TT/T} is the main part of this paper. For the proof, we employ a \textit{new} observation on \textit{bounded generation} for elementary Chevalley groups over commutative rings, see Lemma~\ref{lemma=boundedgeneration} for the precise statement. Here we state the definition of bounded generation in our convention. 

\begin{definition}\label{definition=bg}
For two non-empty subsets $e_{\Gamma}\in X \subseteq \Gamma$, $ Y \subseteq \Gamma$ of a group $\Gamma$, we say that $Y$ is \textit{boundedly generated} by $X$ if there exists a positive  integer $N$ such that $Y\subseteq X^N$ holds true. Here, $X^N(\subseteq G)$ denotes the set of all the products of possibly overlapping $N$ elements in $X$.
\end{definition}
Note that in some literature, the terminology of bounded generation is restricted to the case where $X$ is a finite union of cyclic subgroups and where $Y=\Gamma$. Our convention is much more general.

The use of bounded generation has been revealed to be a powerful tool in the proof of property $(\mathrm{T})$ and fixed point properties; see for instance \cite{Shalom1999} and \cite{Shalom2006}. In general, bounded generation is extremely hard to check, and there are only few known useful examples. As we mentioned above, our new bounded generation, Lemma~\ref{lemma=boundedgeneration}, is weaker than all other nontrivial examples which are known. However, in return for this weakness, it applies to considerably wide classes of Chevalley groups. We expect that Lemma~\ref{lemma=boundedgeneration} has some potential for further applications.

We remark on some relevant results by other researchers. Another generalization of use of quasi-cocycles to homomorphism superrigidity rather than \textit{quasi-homomorphisms}, which was observed firstly in \cite{Mimuraup}, is obtained by Burger and Iozzi \cite{BurgerIozzi} (note that the use of quasi-homomorphisms to homomorphism superrigidity was initiated by \cite{BestvinaFujiwara}, but that the use of (twisted) quasi-cocycles had  not been observed before \cite{Mimuraup}; see also Remark~\ref{remark=quasi-hom}). They introduce the $\ell_2$\textit{-stability}, and prove certain type of homomorphism superrigidity of $\ell_2$-stable groups into $\mathrm{MCG}(\Sigma_g)$. Some discrete infinite groups coming from  products of trees have the $\ell_2$-stability, and they never have property $(\mathrm{T})$ (thus, not $(\mathrm{TT})_{\mathrm{wm}}$ either). Hence, their result applies to more groups (the $\ell_2$-stability is implied by property $(\mathrm{TT})_{\mathrm{wm}}$; see Proposition~\ref{proposition=ME}). The $\ell_2$-stability, on the other hand, is not stable under the measure equivalence: for instance, every uniform and irreducible lattice in $\mathrm{SL}(2,\mathbb{R})\times \mathrm{SL}(2,\mathbb{R})$ enjoys the $\ell_2$-stability, whereas $\pi_1(\Sigma_g)\times \pi_1(\Sigma_g)$ does not for all $g\geq 2$. These two groups are ($L_{\infty}$-integrable) ME.

\section{Property $(\mathrm{TT})/\mathrm{T}$ and Property $(\mathrm{TT})_{\mathrm{wm}}$ }\label{section=TTwm}

As we mentioned in Section~\ref{section=sketch}, we employ \textit{property $(T)$-like properties} for the proof of Theorem~\ref{theorem=main}. We refer the reader to \cite{BekkadelaHarpeValette} for a comprehensive treatise on Kazhdan's property $(\mathrm{T})$.

Let $\Gamma$ be a (countable discrete) group, and $(\pi, \mathcal{H})$ be a unitary $\Gamma$-representation. A map $c\colon \Gamma \to \mathcal{H}$ is called a \textit{quasi-}($1$-)\textit{cocycle} into $\pi$ if 
\[
\sup_{g,h\in \Gamma} \|c(gh)-c(g)-\pi(g)c(h)\|<\infty. 
\]
The quantity in the left-hand side is called the \textit{defect} of $c$. Quasi-($1$-)cocycles are related to (second) \textit{bounded cohomology}, see \cite{Monod} and \cite{MonodICM} for details. Recall that $\Gamma$ has \textit{Kazhdan's property $(\mathrm{T})$} if and only if all genuine cocycles (namely, all quasi-cocycles with zero defect) into every unitary representation $\pi$ are bounded (this is the Delorme--Guichardet theorem; see \cite[Theorem~2.12.4 and Proposition~2.2.9]{BekkadelaHarpeValette}), and that $\Gamma$ is said to have \textit{Monod's property $(\mathrm{TT})$} if all quasi-cocycles into every unitary representation $\pi$ are bounded. Recall in addition that a unitary representation $\pi$ is said to be \textit{weakly mixing} if it does not contain (non-zero) finite dimensional subrepresentations.

By Definition~\ref{definition=TTwm}, the following implications are clear:
\[
(\mathrm{TT}) \quad \Rightarrow \quad (\mathrm{TT})/\mathrm{T} \quad \Rightarrow \quad (\mathrm{TT})_\mathrm{wm} .
\]
\begin{lemma}\label{lemma=TTwmT}
Property $(\mathrm{TT})_{\mathrm{wm}}$ implies property $(\mathrm{T})$. 
\end{lemma}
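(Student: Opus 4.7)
I argue by contrapositive: assuming $\Gamma$ fails Kazhdan's property $(\mathrm{T})$, I construct an unbounded quasi-cocycle into a weakly mixing unitary $\Gamma$-representation, contradicting $(\mathrm{TT})_{\mathrm{wm}}$. By the Delorme--Guichardet theorem, failure of $(\mathrm{T})$ supplies a unitary representation $(\pi_0,\mathcal{H}_0)$ of $\Gamma$ with a sequence of almost invariant unit vectors $(v_n)$ but no nonzero invariant vector. The classical Guichardet construction, choosing weights $t_n \to \infty$ slowly enough that $\sum_n t_n^2 \|\pi_0(g)v_n - v_n\|^2 < \infty$ for each $g \in \Gamma$ (arranged in the usual way by diagonalising against a fixed enumeration of $\Gamma$), then produces a genuine unbounded $1$-cocycle $b(g) = (t_n(\pi_0(g)v_n - v_n))_n$ into $\bigoplus_n \mathcal{H}_0$: if it were bounded, its orbit of $0$ under the associated affine action would admit a circumcenter, yielding an invariant vector in $\pi_0$. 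Since every $1$-cocycle is a quasi-cocycle, it suffices to arrange that the ambient representation be weakly mixing.

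To this end, decompose $\pi_0 = \pi_0^{\mathrm{wm}} \oplus \pi_0^{\mathrm{fd}}$ into its maximal weakly mixing summand and the closure of the span of all finite-dimensional subrepresentations, and split $v_n = v_n^{\mathrm{wm}} + v_n^{\mathrm{fd}}$ correspondingly. If $\limsup_n \|v_n^{\mathrm{wm}}\| > 0$, a renormalised subsequence yields almost invariant unit vectors in $\pi_0^{\mathrm{wm}}$ (which has no invariant vector, being a subrepresentation of $\pi_0$); running Guichardet inside $\bigoplus_n \pi_0^{\mathrm{wm}}$ then produces an unbounded $1$-cocycle into a Hilbert sum of weakly mixing representations, which is itself weakly mixing (any finite-dimensional invariant subspace would project nontrivially to some coordinate, forcing a finite-dimensional subrepresentation of $\pi_0^{\mathrm{wm}}$), and this is the desired contradiction.

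The remaining case is $\|v_n^{\mathrm{wm}}\| \to 0$: almost invariance is concentrated in $\pi_0^{\mathrm{fd}}$, producing a sequence of nontrivial finite-dimensional irreducible subrepresentations $\sigma_n$ of $\Gamma$ converging to the trivial representation in the Fell topology. Since $\Gamma$ must be infinite (finite groups trivially have $(\mathrm{T})$), the regular representation $\lambda_\Gamma$ on $\ell^2(\Gamma)$ is weakly mixing, and one can feed the Fell-convergence data $\sigma_n \to \mathbf{1}$ into the weakly mixing representation $\bigoplus_n (\sigma_n \otimes \lambda_\Gamma)$ to manufacture an unbounded quasi-cocycle via a twisted Guichardet scheme: the tensor factor $\lambda_\Gamma$ enforces weak mixing, while the inevitable defect introduced by that factor is absorbed into the quasi-cocycle tolerance. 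This last step is the main obstacle; converting ``finite-dimensional irreducibles accumulating at $\mathbf{1}$'' into an explicit unbounded quasi-cocycle into a genuinely weakly mixing representation is precisely where the quasi-cocycle strength (rather than mere cocycle strength) of $(\mathrm{TT})_{\mathrm{wm}}$ has to be used, since genuine $1$-cocycles into finite-dimensional unitary representations need not be bounded for a general group.
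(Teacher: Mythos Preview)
Your contrapositive strategy and the Guichardet-type direct-sum construction match the paper's proof. The difference is that the paper skips your case analysis entirely: it invokes \cite[Proposition~2.12.2.(ii)]{BekkadelaHarpeValette}, which asserts directly that if $\Gamma$ fails property~$(\mathrm{T})$ then some \emph{weakly mixing} unitary representation already admits almost invariant unit vectors. One then runs Guichardet inside the $\ell_2$-direct sum of countably many copies of that representation (which remains weakly mixing), and the proof is finished in one stroke. Your Case~1 is exactly this argument; the point is that, with the cited proposition in hand, Case~2 never arises.

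As written, Case~2 is a genuine gap, and the mechanism you sketch cannot work. By Fell's absorption principle, $\sigma_n \otimes \lambda_\Gamma \cong \lambda_\Gamma^{\oplus \dim \sigma_n}$ for every finite-dimensional unitary $\sigma_n$, so $\bigoplus_n (\sigma_n \otimes \lambda_\Gamma) \cong \lambda_\Gamma^{\oplus \infty}$ and the information ``$\sigma_n \to \mathbf{1}$'' is entirely absorbed by the regular factor. If $\Gamma$ is non-amenable this representation has no almost invariant vectors, so there is nothing for a ``twisted Guichardet scheme'' to feed on, and the quasi-cocycle tolerance does not create unboundedness from nothing. (Your closing sentence is also misleading: a $1$-cocycle into a nontrivial finite-dimensional \emph{irreducible} unitary representation is automatically a coboundary, hence bounded, because such a representation has a spectral gap by compactness of the unit sphere; the difficulty in Case~2 is not at the level of individual $\sigma_n$ but in assembling them into something weakly mixing.) The fix is to cite the Bekka--de~la~Harpe--Valette proposition above and delete the $\pi_0^{\mathrm{wm}}\oplus \pi_0^{\mathrm{fd}}$ decomposition altogether.
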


\begin{proof}
First observe that property $(\mathrm{TT})_{\mathrm{wm}}$ implies that all (genuine) cocycles into every weakly mixing unitary representation are bounded. Suppose that $\Gamma$ has property $(\mathrm{TT})_{\mathrm{wm}}$ but that fails to have property $(\mathrm{T})$. Then by \cite[Proposition~2.12.2.$(ii)$]{BekkadelaHarpeValette}, the failure of property $(\mathrm{T})$ implies that some \textit{weakly mixing} unitary representation $(\pi,\mathcal{H})$ admits almost invariant unit vectors. Here we say that $(\pi,\mathcal{H})$ admits \textit{almost invariant} unit \textit{vectors} if for every $\epsilon>0$ and for every finite subset $K\subseteq \Gamma$, there exists a unit vector $\xi \in \mathcal{H}$ such that $\sup_{k\in K} \|\pi (k)\xi-\xi\|< \epsilon$. From this, we can construct an unbounded cocycle into $(\ell_2$-$\bigoplus \pi ,\ell_2$-$\bigoplus \mathcal{H} )$, where ``$\ell_2$-$\bigoplus$" denotes the countable $\ell_2$-direct sum. We can do that by taking the $\ell_2$-direct sum of countable copies of $(\pi,\mathcal{H})$ and by piling up cocycles of the form $g \mapsto \pi(g)\zeta_n -\zeta_n$ for a suitable sequence of vectors $\{\zeta_n\}_n$. This argument is known as  a \textit{Guichardet-type argument}; see \cite[Proposition~2.12.2.$(ii)$]{BekkadelaHarpeValette} for more details. This contradicts the assumption that $\Gamma$ has property $(\mathrm{TT})_{\mathrm{wm}}$ because the countable $\ell_2$-direct sum of weakly mixing unitary representations is also weakly mixing. 
\end{proof}

Therefore, we have obtained the following implications:
\[
(\mathrm{TT}) \quad \Rightarrow \quad (\mathrm{TT})/\mathrm{T} \quad \Rightarrow \quad (\mathrm{TT})_\mathrm{wm}  \quad \Rightarrow \quad (\mathrm{T}) .\tag{$\star$}
\]

In what follows, we briefly discuss the reverse implications in $(\star)$. The right implication cannot be reversed, because all infinite hyperbolic groups fail to have property $(\mathrm{TT})_\mathrm{wm}$ (see Section~\ref{section=ah}). Burger and Iozzi \cite{BurgerIozzi} constructed a counterexample to the converse implication to the left one. It might be open whether the middle implication can be reversed.

The next proposition explains why we need both property $(\mathrm{TT})/\mathrm{T}$ and property $(\mathrm{TT})_{\mathrm{wm}}$ to prove Theorem~\ref{theorem=main}.
\begin{proposition}\label{proposition=ME}
\begin{enumerate}[$(1)$]
 \item Property $(\mathrm{TT})/\mathrm{T}$ and property $(\mathrm{TT})_{\mathrm{wm}}$, respectively, pass to group quotients and to finite index subgroups.
 \item If $\Gamma$ has property $(\mathrm{TT})/\mathrm{T}$, and if $\Lambda$ is measure equivalent to $\Gamma$ with the $L_2$-integrability condition on an ME cocycle from $\Gamma$ to $\Lambda$, then $\Lambda$ has property $(\mathrm{TT})_{\mathrm{wm}}$.
\end{enumerate}
\end{proposition}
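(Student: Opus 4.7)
Both operations are handled by classical constructions on the representation and quasi-cocycle sides. For a surjection $q\colon \Gamma \twoheadrightarrow \bar\Gamma$: a unitary $\bar\Gamma$-representation $\bar\pi$ has no trivial subrepresentation (resp.\ is weakly mixing) if and only if its inflation $\bar\pi \circ q$ does, and inflating a quasi-cocycle on $\bar\Gamma$ along $q$ preserves both the defect and the property of being bounded; hence both $(\mathrm{TT})/\mathrm{T}$ and $(\mathrm{TT})_{\mathrm{wm}}$ descend to $\bar\Gamma$. For a finite-index subgroup $\Gamma_0 \le \Gamma$: given a unitary $\Gamma_0$-representation $\pi$ without trivial subrepresentation (resp.\ weakly mixing) and a quasi-cocycle $c\colon \Gamma_0 \to \mathcal H_\pi$, form the induced unitary $\Gamma$-representation $\tilde\pi = \mathrm{Ind}_{\Gamma_0}^{\Gamma} \pi$ together with the induced quasi-cocycle $\tilde c$ defined via the transversal cocycle on a set $T$ of coset representatives for $\Gamma/\Gamma_0$. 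Frobenius reciprocity gives the ``no trivial subrepresentation'' transfer, and a standard restriction--decomposition argument for $\tilde\pi|_{\Gamma_0}$ combined with the fact that weak mixing passes to finite-index subgroups handles weak mixing. Finiteness of $T$ then keeps the defect of $\tilde c$ finite and makes boundedness of $\tilde c$ equivalent to that of $c$, after which applying the relevant property of $\Gamma$ concludes.

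\textbf{The plan for item $(2)$.}
This is an $L^2$-induction in the style of Shalom and Monod. Fix an ME-coupling $(\Omega, m)$ of $\Gamma$ and $\Lambda$, a fundamental domain $X \subset \Omega$ for the $\Lambda$-action, and the associated ME cocycle $\alpha\colon \Gamma \times X \to \Lambda$; by assumption, the word length $|\alpha(\gamma, \cdot)|_\Lambda$ lies in $L^2(X)$ for each $\gamma \in \Gamma$. Given a weakly mixing unitary $\Lambda$-representation $(\pi, \mathcal H)$ and a quasi-cocycle $c\colon \Lambda \to \mathcal H$ with defect $D$, construct the induced unitary $\Gamma$-representation $\tilde\pi$ on $L^2(X, \mathcal H)$ via the standard formula using $\alpha$, and define the induced quasi-cocycle
\[
\tilde c\colon \Gamma \to L^2(X, \mathcal H), \qquad \bigl(\tilde c(\gamma)\bigr)(x) = c\bigl(\alpha(\gamma^{-1}, x)^{-1}\bigr).
\]
Two points require verification. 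First, $\tilde c$ really takes values in $L^2(X, \mathcal H)$ and has finite defect: both follow by combining the linear growth $\|c(\lambda)\|_{\mathcal H} \le D \cdot |\lambda|_\Lambda + \|c(e_\Lambda)\|$, valid for every quasi-cocycle with respect to a word-length $|\cdot|_\Lambda$, with the $L^2$-integrability of $\alpha$, together with the cocycle identity for $\alpha$ and the quasi-cocycle identity for $c$. Second, $\tilde\pi$ contains no non-zero $\Gamma$-invariant vector: this is exactly where weak mixing of $\pi$ and ergodicity of the ME-coupling enter, via the standard criterion ``$\pi$ is weakly mixing iff $\pi \otimes \bar\pi$ admits no non-zero invariant vector'' applied to the tensor square of $\tilde\pi$. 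With both verifications in place, property $(\mathrm{TT})/\mathrm{T}$ of $\Gamma$ applied to $(\tilde\pi, \tilde c)$ forces $\tilde c$ to be bounded in $L^2(X, \mathcal H)$-norm, and unwinding the definition on a positive-measure subset of $X$ recovers boundedness of $c$ on $\Lambda$.

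\textbf{Main obstacle.}
The delicate step is the passage from weak mixing of $\pi$ to the absence of $\Gamma$-invariant vectors in $\tilde\pi$: weak mixing is a ``no finite-dimensional subrepresentation'' condition, while the conclusion needed for the induced representation is the stronger ``no invariant vector'' property, and the bridge is precisely ergodicity of the coupling together with the tensor-square characterisation of weak mixing. The $L^2$-integrability hypothesis is simultaneously doing double duty, ensuring both that $\tilde c$ lies in the Hilbert space $L^2(X, \mathcal H)$ and that its defect is finite; without it, one would only obtain a quasi-cocycle into a larger measurable $\Gamma$-space to which property $(\mathrm{TT})/\mathrm{T}$ would not apply. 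Note finally that property $(\mathrm{TT})/\mathrm{T}$---not merely $(\mathrm{TT})_{\mathrm{wm}}$---is exactly what is needed on $\Gamma$ for this step, since $\tilde\pi$ is only known to have no trivial subrepresentation rather than to be weakly mixing, which is the structural reason both properties must be considered in tandem.
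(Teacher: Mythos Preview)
Your treatment of item $(1)$ and most of item $(2)$ matches the paper's approach: the induced representation and quasi-cocycle are set up identically, the $L^2$-integrability hypothesis is used in the same way to guarantee that $\tilde c$ lands in $L^2(X,\mathcal H)$, and the role of weak mixing (to ensure $\tilde\pi \not\supseteq 1_\Gamma$) is correctly identified.

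The gap is in your final sentence for item $(2)$. You write that ``unwinding the definition on a positive-measure subset of $X$ recovers boundedness of $c$ on $\Lambda$,'' but this step is not elementary, and the paper explicitly flags it as a genuine difficulty. Boundedness of $\tilde c$ only says
\[
\sup_{\gamma\in\Gamma}\int_X \bigl\|c\bigl(\alpha(\gamma^{-1},x)^{-1}\bigr)\bigr\|^2\,dm(x)<\infty,
\]
an integral bound that tells you nothing directly about $\|c(\lambda)\|$ for a fixed $\lambda\in\Lambda$: for a given $\lambda$ there is no reason the level sets $\{x\in X:\alpha(\gamma^{-1},x)^{-1}=\lambda\}$ should have measure bounded below uniformly in $\lambda$ (for any choice of $\gamma$ depending on $\lambda$), so one cannot simply read off a uniform pointwise bound on $c$. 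The paper closes this gap by invoking the Monod--Shalom induction theorem in bounded cohomology, which makes the passage $c\mapsto\tilde c$ injective at the level of the relevant quasi-cocycle quotients and hence transfers boundedness back to $c$. Your outline needs either to cite that result or to supply an explicit argument replacing it; the naive unwinding does not suffice.
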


\begin{proof}
Item $(1)$ is straightforward from the pull-back and induction of quasi-cocycles (for the latter part, compare with the argument below). Here, observe that the weak mixing property for unitary representations is preserved by pull-back. In outline, item $(2)$ will be deduced from the $L_2$-induction of quasi-cocycles for the ME coupling $(\Lambda, \Gamma)$. In what follows, we will discuss this in more details. 

Let $(\Omega,m)$ be an ME-coupling for $(\Gamma,\Lambda)$, $D\cong \Omega/\Lambda$ be a Borel $\Lambda$-fundamental domain. Let $\alpha\colon \Gamma \times D \to \Lambda$ be the corresponding ME-cocycle, namely, 
\[
\textrm{for every $\gamma\in \Gamma$ and for almost every $x\in D$,}\quad \gamma \cdot x =\alpha(\gamma, x) \gamma x.
\]
Here $x\stackrel{\gamma \cdot}{\mapsto} \gamma\cdot x$ denotes the natural action $\Gamma \curvearrowright D$, and $x\stackrel{\gamma}{\mapsto} \gamma x$ denotes the original action of $\Gamma$ on $\Omega$. The reader may consult \cite{Furmansurvey} for more details on these notions and definitions. We recall that $\alpha$ is said to satisfy the \textit{$L_2$-integrability condition} if 
\[
\mathrm{for\ every\ \gamma \in \Gamma ,}\quad |\alpha(\gamma,\cdot )|_{\Lambda} \in L_2(D).
\]
Here $|\cdot|_{\Lambda}$ is the length function on $\Lambda$ with respect to some finite generating set. (Here we refer to the following facts. Property $(\mathrm{T})$ for a discrete group implies the finite generation \cite[Theorem~1.3.1]{BekkadelaHarpeValette}. Property $(\mathrm{TT})/\mathrm{T}$ implies property $(\mathrm{T})$ for a countable discrete group by $(\star)$. Property $(\mathrm{T})$ is an ME-invariant \cite[Subsection~3.1.1]{Furmansurvey}.)

Let $(\sigma, \mathfrak{H})$ be a unitary $\Lambda$-representation. First, we define the induced unitary $\Gamma$-representation $(\Omega \mathbf{Ind}_{\Lambda}^{\Gamma}\sigma, \Omega_{\Lambda}^{\Gamma}\mathfrak{H})$ by 
$\Omega_{\Lambda}^{\Gamma}\mathfrak{H}\cong L_2(D,\mathfrak{H})$ with the twisted $\Gamma$-action defined almost everywhere by 
\[
((\Omega \mathbf{Ind}_{\Lambda}^{\Gamma}\sigma)(\gamma) f)(x):=
\sigma (\alpha(\gamma^{-1},x)^{-1})f(\gamma^{-1}\cdot x).
\]
Then $\Omega \mathbf{Ind}_{\Lambda}^{\Gamma}\sigma$ becomes a unitary $\Gamma$-representation. Secondly, let $c\colon \Lambda \to \mathfrak{H}$ be a quasi-$\sigma$-cocycle. Then, we hope to define the induced map $\tilde{c}$ by the following formula:
\[
\textrm{For $\gamma\in \Gamma$ and $x\in D$, }\quad \tilde{c}(\gamma)(x):=c(\alpha(\gamma^{-1},x)^{-1}).
\]
If it is well-defined, then this gives rise to a quasi-cocycle into $\Omega \mathbf{Ind}_{\Lambda}^{\Gamma}\sigma$.

We warn that there is, in general, an issue of the $2$-summability to obtain the well-definedness of $\tilde{c}$. However, in our case, the assumption of the $L_2$-integrability of $\alpha$ ensures this summability. We, furthermore, warn that even if $\sigma \not \supseteq 1_{\Lambda}$, it may happen that $\Omega \mathbf{Ind}_{\Lambda}^{\Gamma}\sigma   \supseteq 1_{\Gamma}$. The argument in \cite[Subsection~4.1.1]{Furmansurvey}, however, shows the following: if $\sigma$ is, besides, weakly mixing, then we conclude that $\Omega \mathbf{Ind}_{\Lambda}^{\Gamma}\sigma  \not \supseteq 1_{\Gamma}$ (this is the role of the weak mixing property in the current paper). 

By observing these two subtle points above, we start from an arbitrarily taken \textit{weakly mixing} unitary representation $\sigma$ of $\Lambda$ and an arbitrarily taken quasi-cocycle $c$ into $\sigma$. Then, by the $L_2$-induction process above, we obtain the unitary representation $\Omega \mathbf{Ind}_{\Lambda}^{\Gamma}\sigma $, \textit{that does not contain trivial representation}, and the quasi-cocycle $\tilde{c}$ into $\Omega \mathbf{Ind}_{\Lambda}^{\Gamma}\sigma $. Then, by property $(\mathrm{TT})/\mathrm{T}$ for $\Gamma$, we have that $\tilde{c}$ is bounded.

Initially, there is a gap in deducing the boundedness of $c$ from that of $\tilde{c}$. However, a deep result in \cite[Theorem 4.4]{MonodShalom}, which concerns inductions of bounded cohomology, enables us to take this deduction without gaps in our (unitary) setting. Therefore, $c$ must be bounded and $\Lambda$ enjoys property $(\mathrm{TT})_{\mathrm{wm}}$, as desired.
\end{proof}

The next lemma is the key observation to Section~\ref{section=TTwmforE}, and it is a development of \cite[Proposition~6.6]{Mimura1}. Recall from Definition~\ref{definition=bg} on bounded generations.

\begin{definition}\label{definition=relTT}
A pair $\Gamma\supseteq Z$ of a group and a subset is said to have \textit{relative property} $(\mathrm{TT})$ if all quasi-cocycles on $\Gamma$ into every unitary $\Gamma$-representation are bounded on $Z$. 
\end{definition}

\begin{lemma}\label{lemma=Shalom}
Let $\Gamma$ be a $($countable discrete$)$ group, $\Gamma_0$ be a subgroup of $\Gamma$, and $Z\ni e_{\Gamma}$ be a $\mathrm{subset}$ of $\Gamma$. Assume that the triple $(\Gamma,\Gamma_0,Z)$ satisfies the following three conditions.
\begin{enumerate}[$(i)$]
  \item The subset $Z$ generates $\Gamma$ $($as a group$)$.
  \item For every $h\in \Gamma_0$, $h^{-1}Zh \subseteq Z$.
  \item The group $\Gamma$ is boundedly generated by $\Gamma_0\cup Z$.
\end{enumerate}
Assume, besides,  that the pair $\Gamma \supseteq Z$ has relative property $(\mathrm{TT})$ and that $\Gamma$ has property $(\mathrm{T})$. Then, $\Gamma$ has property $(\mathrm{TT})/\mathrm{T}$.

In particular, if $(\Gamma, \Gamma_0=\Gamma, Z)$ fulfills conditions $(i)$ and $(ii)$, then relative property $(\mathrm{TT})$ for the pair $\Gamma \supseteq Z$ together with property $(\mathrm{T})$ for $\Gamma$ implies property $(\mathrm{TT})/\mathrm{T}$ for $\Gamma$.
\end{lemma}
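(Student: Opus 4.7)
The plan is to turn the three hypotheses into successive uniform bounds on an arbitrarily chosen quasi-cocycle $c\colon \Gamma \to \mathcal{H}$, of defect $D$, into a unitary $\Gamma$-representation $(\pi,\mathcal{H})$ that contains no trivial subrepresentation. Relative property $(\mathrm{TT})$ for $\Gamma \supseteq Z$ immediately supplies a constant $M$ with $\|c(z)\| \leq M$ for every $z \in Z$. I will first promote this to a uniform bound on $c|_{\Gamma_0}$ using conditions $(i)$, $(ii)$ and property $(\mathrm{T})$, and then propagate the resulting bound to all of $\Gamma$ via the bounded generation hypothesis $(iii)$.

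For the key middle step, fix $z \in Z$ and $h \in \Gamma_0$, and factorize $zh = h \cdot (h^{-1}zh)$. Two applications of the quasi-cocycle relation give
\[
c(z) + \pi(z)c(h) \;\approx\; c(zh) \;\approx\; c(h) + \pi(h)\, c(h^{-1}zh),
\]
each approximation holding up to an error of $D$. Rearranging and using $h^{-1}zh \in Z$ by condition $(ii)$, I obtain
\[
\|(\pi(z) - I)c(h)\| \;\leq\; \|c(z)\| + \|c(h^{-1}zh)\| + 2D \;\leq\; 2M + 2D,
\]
a bound that is \emph{uniform} in both $z \in Z$ and $h \in \Gamma_0$. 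Since property $(\mathrm{T})$ forces $\Gamma$ to be finitely generated and $Z$ generates $\Gamma$ by $(i)$, I can extract a finite generating set $S \subseteq Z \cup Z^{-1}$ of $\Gamma$; the identity $\|(\pi(z^{-1})-I)v\| = \|(\pi(z)-I)v\|$ shows that the above uniform bound holds equally for every $s \in S$ in place of $z$. Property $(\mathrm{T})$, together with the absence of nonzero $\pi$-invariant vectors, now supplies a Kazhdan constant $\kappa = \kappa_S > 0$ such that $\max_{s \in S}\|(\pi(s)-I)v\| \geq \kappa \|v\|$ for every $v \in \mathcal{H}$. Applying this to $v = c(h)$ yields
\[
\|c(h)\| \;\leq\; \frac{2M+2D}{\kappa} \;=:\; C_0 \qquad \text{for every } h \in \Gamma_0.
\]

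Finally, condition $(iii)$ furnishes an integer $N$ with $\Gamma \subseteq (\Gamma_0 \cup Z)^N$, so that every $g \in \Gamma$ decomposes as $g = u_1 u_2 \cdots u_N$ with $u_i \in \Gamma_0 \cup Z$. Iterated use of the quasi-cocycle relation then gives
\[
\|c(g)\| \;\leq\; \sum_{i=1}^{N} \|c(u_i)\| + (N-1)D \;\leq\; N \max(M,C_0) + (N-1)D,
\]
showing that $c$ is bounded on $\Gamma$. The particular case $\Gamma_0 = \Gamma$ renders $(iii)$ automatic and the final paragraph superfluous, which explains the ``in particular" clause of the lemma. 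The main obstacle is the middle step: it is crucial that $h^{-1}zh$ lies back in $Z$ (condition $(ii)$), so that \emph{both} summands $c(z)$ and $c(h^{-1}zh)$ in the key estimate can be controlled by $M$. Without $(ii)$ one could not turn the $Z$-bound on $c$ into uniform almost-invariance of $c(h)$, and the Kazhdan spectral-gap step would fail to bound $c$ on $\Gamma_0$.
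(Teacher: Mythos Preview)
Your proof is correct and follows essentially the same approach as the paper's: derive the uniform almost-invariance estimate $\|(\pi(z)-I)c(h)\|\leq 2M+2D$ from conditions $(i)$ and $(ii)$, use property $(\mathrm{T})$ to bound $c$ on $\Gamma_0$, and finish with bounded generation $(iii)$. The only cosmetic difference is that the paper phrases the middle step as a contradiction (if $c|_{\Gamma_0}$ were unbounded, the normalized vectors $c(h_n)/\|c(h_n)\|$ would be almost invariant, violating $(\mathrm{T})$), whereas you invoke a Kazhdan constant directly; these are two standard formulations of the same spectral-gap argument.
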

Note that $Z\subseteq \Gamma$ is in general \textit{not} a subgroup of $\Gamma$: compare with condition $(i)$.

For the proof, recall the original definition of Kazhdan's property $(\mathrm{T})$: $\Gamma$ is said to have property $(\mathrm{T})$ if every unitary representation that does not contain trivial representation admits almost invariant unit vectors.  See the proof of Lemma~\ref{lemma=TTwmT} for the definition of existence of almost invariant vectors. 

\begin{proof}
Let $(\pi,\mathcal{H})$ be a unitary $\Gamma$-representation that does not contain trivial representation, and $c\colon \Gamma\to \mathcal{H}$ be a quasi-cocycle into $\pi$. Let $C$ be the maximum of the defect of $c$ and $\sup_{z\in Z} \|c(z)\|$. By relative property $(\mathrm{TT})$, this $C$ is finite.

We claim that $c(\Gamma_0)$ is bounded. Indeed, for every $h\in \Gamma_0$ and $z\in Z$, we have that by $(ii)$,
\begin{align*}
    &\|\pi(z)c(h) - c(h)\| \leq \|c(zh) - c(h)-c(z)\|+C \\
\leq  & \|c(h (h^{-1}zh)) - c(h)\|+2C \leq \|c(h)-c(h)\| +4C =4C.
\end{align*}
Suppose, on the contrary, that $c(\Gamma_0)$ is unbounded. Take $\{h_n\}_n\subseteq  \Gamma_0$ such that $\|c(h_n)\| \to  \infty$. Then, we deduce that $\{c(h_n)/\|c(h_n)\|\}_n$ forms a sequence of almost invariant unit vectors for $\pi$. To see this, observe that for every finite subset $K$ of $G$, there exists $l\in \mathbb{N}$ such that $(Z\cup Z^{-1})^l\supseteq K$ by $(i)$. However, this is absurd because $\Gamma$ has property $(\mathrm{T})$. Hence, $c(\Gamma_0)$ is bounded. Finally, we deduce the boundedness of $c(\Gamma)$ with the aid of bounded generation (condition $(iii)$) and a triangle inequality of $c$ up to $+C$ error.

For the last part of the assertions in Lemma~\ref{lemma=Shalom}, observe that $\Gamma$ is boundedly generated by $\Gamma$ itself.
\end{proof}

This lemma is inspired by \cite[Section~4.III]{Shalom2006}. The first application of Lemma~\ref{lemma=Shalom} was in \cite[Proposition~6.6]{Mimura1}, where we use for the tuple $(\Gamma,\Gamma_0,Z)=(\mathrm{E}(n,A),\mathrm{E}(n,A)\cap \mathrm{GL}(n-1,A), M \cup L)$ for $A$ as in Theorem~$\ref{theorem=main}$. Here $\mathrm{GL}(n-1,A)$ sits on the left-upper corner, $M:=\langle e_{i,n}^a : 1\leq i\leq n-1 , a\in A\rangle(\simeq (A^{n-1},+))$, and  $L:=\langle e_{n,j}^a : 1\leq j\leq n-1 , a\in A\rangle (\simeq (A^{n-1},+))$. We warn that, in this case, condition $(iii)$ (bounded generation) is proved by Vaserstein \cite[Corollary~3]{Vaserstein}, but that for other root systems $\Phi$, it might be open whether similar bounded generation remains true. As we will see in Section~\ref{section=TTwmforE}, in the present paper,  we will employ much weaker bounded generation (Lemma~\ref{lemma=boundedgeneration}). Owing to it, we are able to apply Lemma~\ref{lemma=Shalom} to the case of that $\Gamma_0=\Gamma$, where condition $(iii)$ is trivially fulfilled.

\section{Key theorem: property $(\mathrm{TT})/\mathrm{T}$ for elementary Chevalley groups}\label{section=TTwmforE}

As we mentioned in Section~\ref{section=sketch}, the following theorem is the essential part in the present paper, and may be regarded as a strengthening of \cite[Theorem~1.1]{ErshovJaikinKassabov} for elementary Chevalley groups (recall Lemma~\ref{lemma=TTwmT}). Note that, however, because we employ property $(\mathrm{T})$ for such groups in the proof below, our theorem is based on their result.

\begin{theorem}\label{theorem=TT/T}
Let $\Phi$ and $A$ be as in Theorem~$\ref{theorem=main}$. Then $\mathrm{E}(\Phi,A)$ has property $(\mathrm{TT})/\mathrm{T}$.
\end{theorem}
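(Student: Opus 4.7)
The plan is to apply the second (``last part'') statement of Lemma~\ref{lemma=Shalom} to the triple $(\Gamma,\Gamma_0,Z)=(\mathrm{E}(\Phi,A),\mathrm{E}(\Phi,A),Z)$, where $Z$ is a conjugation-invariant generating subset of $\Gamma$. With $\Gamma_0=\Gamma$, condition $(iii)$ of Lemma~\ref{lemma=Shalom} is vacuous, condition $(ii)$ reduces to $\Gamma$-conjugation-invariance of $Z$, and condition $(i)$ is simply that $Z$ generate $\Gamma$. The natural choice is to take $Z$ to be the $\Gamma$-normal closure of the union of the root subgroups $\bigcup_{\alpha\in\Phi}U_\alpha$; this visibly satisfies $(i)$ and $(ii)$.

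Two ingredients then remain to feed into Lemma~\ref{lemma=Shalom}. Property $(\mathrm{T})$ for $\mathrm{E}(\Phi,A)$ is handed to us by the main theorem of Ershov--Jaikin--Kassabov~\cite{ErshovJaikinKassabov}. The substantive step is establishing relative property $(\mathrm{TT})$ for the pair $\Gamma\supseteq Z$: for any unitary $\Gamma$-representation $(\pi,\mathcal{H})$ and any quasi-cocycle $c\colon\Gamma\to\mathcal{H}$, the restriction $c|_Z$ is bounded.

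My approach to relative $(\mathrm{TT})$ is two-step. First, I would control $c$ on each individual root subgroup $U_\alpha$. Since $\Phi$ is reduced irreducible of rank at least $2$, for every root $\alpha$ there is a partner $\beta$ with $\alpha+\beta\in\Phi$, and the Chevalley commutator relation expresses $x_{\alpha+\beta}(ab)$ as a commutator $[x_\alpha(a),x_\beta(b)]$ up to controlled higher-order correction. A Shalom-style computation at the level of quasi-cocycles --- essentially the same ``commutator trick'' driving the proof of Lemma~\ref{lemma=Shalom} --- shows that if $c|_{U_\alpha}$ were unbounded, the normalized sequence $c(u_n)/\|c(u_n)\|$ would manufacture a sequence of almost invariant unit vectors for $\pi$ under a partner root subgroup, contradicting property $(\mathrm{T})$. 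Second, I would propagate from the individual $U_\alpha$'s to all of $Z$ using the new bounded-generation statement of Lemma~\ref{lemma=boundedgeneration}: expressing elements of $Z$ as products of a bounded number of root-unipotent factors allows the quasi-cocycle triangle inequality (at a cost of an additive multiple of the defect per factor) to transfer boundedness from the root subgroups up to $Z$.

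The main obstacle is Lemma~\ref{lemma=boundedgeneration} itself. In the $\Phi=A_{n-1}$ setting one has Vaserstein's strong bounded generation of $\mathrm{E}(n,A)$ by elementary matrices, but no analogue is known for arbitrary reduced irreducible classical $\Phi$ over an arbitrary finitely generated commutative ring. The key insight enabling the theorem is that a much weaker form of bounded generation --- one adequate only to reach elements of the conjugation-closure $Z$, rather than all of $\Gamma$ --- already suffices for Lemma~\ref{lemma=Shalom}, because the passage from $Z$ to $\Gamma$ is then shouldered by property $(\mathrm{T})$ inside the proof of that lemma. Carrying out the Chevalley-theoretic manipulations that establish this weak bounded generation under the minimal hypothesis that $A$ is a finitely generated unital commutative associative ring is expected to be the most technical step of the proof.
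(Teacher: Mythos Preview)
Your overall architecture matches the paper's: apply Lemma~\ref{lemma=Shalom} with $\Gamma_0=\Gamma=\mathrm{E}(\Phi,A)$ and $Z$ the $\Gamma$-conjugacy closure of the set $X$ of root unipotents, take property $(\mathrm{T})$ from \cite{ErshovJaikinKassabov}, and pass from boundedness on $X$ to boundedness on $Z$ via the weak bounded generation of Lemma~\ref{lemma=boundedgeneration}. Where you diverge is in how you obtain boundedness of the quasi-cocycle on $X$ itself, and there your sketch has a genuine gap.

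The ``commutator trick'' in the proof of Lemma~\ref{lemma=Shalom} works because one \emph{already knows} $c$ is bounded on $Z$: the estimate $\|\pi(z)c(h)-c(h)\|\le 4C$ uses both $z\in Z$ and $h^{-1}zh\in Z$, with $C\geq\sup_Z\|c\|$ as input. Your proposed analogue would, from unboundedness of $c$ on $U_\alpha$, produce almost-invariance of $c(u_n)/\|c(u_n)\|$ under a partner $U_\beta$. But the Chevalley relation gives $[U_\alpha,U_\beta]\subseteq\prod_{i,j\geq 1} U_{i\alpha+j\beta}$, so conjugation by $U_\beta$ does \emph{not} preserve $U_\alpha$, and the displacement bound would involve $\|c\|$ on the commutator subgroups, which you have not yet controlled. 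Even granting almost-invariance under one $U_\beta$, that alone does not contradict property $(\mathrm{T})$ for $\Gamma$, which concerns almost-invariance under a full generating set; bootstrapping to all root subgroups runs into the same circularity.

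The paper obtains relative $(\mathrm{TT})$ for $\Gamma\supseteq X$ by a different mechanism: realize each root subgroup inside an \emph{abelian normal} subgroup $H$ of a semidirect product $G\ltimes H$ embedded in $\Gamma$ (e.g.\ $\mathrm{E}(n-1,A)\ltimes A^{n-1}$ in type $A_{n-1}$), invoke relative property $(\mathrm{T})$ for $G\ltimes H\trianglerighteq H$ (from \cite{Shalom1999} or \cite[\S7]{ErshovJaikinKassabov}), and then apply Ozawa's theorem \cite{Ozawa} that for abelian $H$ this upgrades to relative $(\mathrm{TTT})$, hence relative $(\mathrm{TT})$. Ozawa's result is precisely the ingredient your outline is missing; it is what replaces the circular bootstrap.

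A minor point of emphasis: you describe establishing Lemma~\ref{lemma=boundedgeneration} as ``the most technical step,'' but the paper treats it as essentially available in the literature (Park--Woodburn for $A_{n-1}$, Kopeiko for $C_n$, Stepanov \cite{Stepanov} in general), with commutativity of $A$ the only real hypothesis needed. The paper's contribution is recognizing that this \emph{weak} bounded generation suffices and combining it with Ozawa's upgrade inside the framework of Lemma~\ref{lemma=Shalom}.
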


Because the proof of Theorem~\ref{theorem=TT/T} is involved, we first verify this for the case where $\Phi=A_{n-1}$ (hence $\mathrm{E}(\Phi,A)=\mathrm{E}(n,A)$) for $n\geq 3$; we then indicate the ingredients needed for the proof in the full generality.

\begin{proof}[Proof of Theorem~$\ref{theorem=TT/T}$ for $\Phi=A_{n-1}$ $(n\geq 3)$]

Set $\Gamma=\Gamma_0=\mathrm{E}(n,A)$ and $X$ is the set of all elementary root unipotents, namely, in this case,
\[
X:=\{e_{i,j}^a:1\leq i\leq n,1\leq j\leq n, i\ne j ,a\in A\}.
\]
Furthermore, set $Z$ as the union of all $\Gamma$-conjugates of $X$, that means,
\[
Z:= \bigcup_{h\in \Gamma} h^{-1}X h \subseteq \Gamma.
\]
We claim that this triple $(\Gamma,\Gamma_0,Z)$ fulfills all of the assumptions in the final part of Lemma~\ref{lemma=Shalom}. First, items $(i)$ and $(ii)$ are by construction. Property $(\mathrm{T})$ for $\Gamma$ is a big result, and was proved by the combination of  Shalom \cite{Shalom2006} and Vaserstein \cite{Vaserstein}; see also \cite{ErshovJaikinKassabov}. Therefore, the final process is to check that $\Gamma \supseteq Z$ has relative property $(\mathrm{TT})$. For this, we will show the following two assertions:
\begin{enumerate}[$(a)$]
  \item \underline{The pair $\Gamma \supseteq X$ has relative property $(\mathrm{TT})$.}
  \item \underline{The subset $Z$ is boundedly generated by $X$.}
\end{enumerate}
By a triangle inequality up to uniformly additive bounded error for quasi-cocycles, these two statements will immediately lead us to the conclusion. In what follows, we will verify, respectively, assertions $(a)$ and $(b)$.

\bigskip

\noindent
\underline{Proof of $(a)$:} Shalom \cite[Theorem~3.4]{Shalom1999} shows that the pair $\mathrm{E}(n-1,A)\ltimes A^{n-1} \trianglerighteq A^{n-1}$ has relative property $(\mathrm{T})$, where $\mathrm{E}(n-1,A)$ acts on $A^{n-1}$ by the natural left multiplication. Here we do not recall the definition of \textit{relative property $(\mathrm{T})$}; instead, we refer the reader to \cite[Definition~1.4.3]{BekkadelaHarpeValette} (there, this property is stated as \textit{property $(\mathrm{T})$ for a pair}). Then, we apply to a result by Ozawa \cite[Proposition~3: $(1)$ $\Rightarrow$ $(2)$]{Ozawa}, which states that for a pair of the form $G \ltimes H \trianglerighteq H$ for an \textit{abelian} $H$, relative property $(\mathrm{T})$ implies relative property $(\mathrm{TTT})$ of Ozawa. Again, we do not state the definition of (relative) property $(\mathrm{TTT})$ here, but we remark that relative property $(\mathrm{TTT})$ implies relative property $(\mathrm{TT})$ as in Definition~\ref{definition=relTT}; see \cite{Ozawa} for details. By combining these two results, we conclude that the pair $\mathrm{E}(n-1,A)\ltimes A^{n-1} \trianglerighteq A^{n-1}$ has relative property $(\mathrm{TT})$. Finally, embed $\mathrm{E}(n-1,A)\ltimes A^{n-1}$ into $\mathrm{E}(n,A)$ in such several (finitely many) ways that every $h \in X$ lies in the image of $A^{n-1}$ for some of such embeddings. This ends the proof of $(a)$.

\bigskip

\noindent
\underline{Proof of $(b)$:} We employ arguments in the work of Park and Woodburn \cite{ParkWoodburn} as follows (here we only sketch them). Take $s=e_{i,j}^a\in X$. Then for $\gamma\in \Gamma$, 
$$
\gamma s\gamma^{-1}= I_n+(i\textrm{-th column vector of }\gamma)\cdot a\cdot (j\textrm{-th column vector of }\gamma^{-1}).
$$
Let $\mathbf{v}={}^t(v_1,\ldots ,v_n)$ be the $i$-th column vector of $\gamma$, $\mathbf{w}=(w_1,\ldots ,w_n)$ be $a$ times the $j$-th row vector of $\gamma^{-1}$, and $(\gamma_1,\ldots ,\gamma_n)$ be the $i$-th row vector of $\gamma^{-1}$. Then since $i\ne j$, 
$
\sum_{l=1}^{n}\gamma_lv_l=1$ and $\sum_{l=1}^{n}w_lv_l=0$. 
Therefore, by letting $b_{l,m}=w_l\gamma_m -w_m \gamma_l$, we have that 
$
\mathbf{w}=\sum_{(1\leq)l<m(\leq n)}b_{l,m}(v_m\mathbf{e}_l-v_l\mathbf{e}_m)
$, 
and hence (the following are due to Suslin),
\begin{align*}
 \gamma s\gamma^{-1}=&I_n + \mathbf{v}\cdot \mathbf{w} = I_n + \mathbf{v}\cdot \Big( \sum_{(1\leq)l<m(\leq n)}b_{l,m}(v_m\mathbf{e}_l-v_l\mathbf{e}_m) \Big) \\
 =& I_n + \sum_{(1\leq)l<m(\leq n)} \mathbf{v}\cdot b_{l,m}(v_m\mathbf{e}_l-v_l\mathbf{e}_m) \\
 =& \prod_{(1\leq)l<m(\leq n)} (I_n + \mathbf{v}\cdot b_{l,m}(v_m\mathbf{e}_l-v_l\mathbf{e}_m)).
\end{align*}
Mennicke observed that each factor in the product on the very below side of the equalities above can be written as a product of bounded numbers of (only depend on $n$ and does not depend on $\gamma$ and $s$), precisely at most $8+2(n-2)=2n+4$, elementary matrices. Therefore, for every $\gamma\in \Gamma$ and for every $s\in X$, $\gamma s\gamma^{-1}$ is a product of at most $(n+2)n(n-1)$ elements in $X$; this bound is independent of the choices of $\gamma$ and $s$. For more details, consult Lemma 2.3, Lemma 2.6, and Corollary 2.7 in \cite{ParkWoodburn}.

Therefore, we have completed the proof of Theorem~\ref{theorem=TT/T} for $\Gamma=\mathrm{E}(n,A)$.
\end{proof}

Note that, in comparison with Vaserstein's bounded generation \cite{Vaserstein} (see the end of Section~\ref{section=TTwm}), the bounded generation in assertion $(b)$ above is weaker, but that the proof is considerably less involved (for instance, the proof of $(b)$ does not even need the finite generation of $A$: only the commutativity of $A$ is needed). From this point of view, it may be natural to expect that similar ``weak bounded generations" remain true for other root systems $\Phi$. As we will see below, this is \textit{indeed the case}: for instance, if $\Phi=C_n$ (symplectic case), then \cite[Section~1]{Kopeiko} implies it. In the full generality, we have the following, which may be seen as a new bounded generation ingredient to study fixed point properties and boundedness properties.

\begin{lemma}[New bounded generation observation]\label{lemma=boundedgeneration}
Let $A$ and $\Phi$ be as in Theorem~$\ref{theorem=main}$. Set $X:=X(\Phi,A)$, which denotes the set of all elementary root unipotents, and 
\[
Z:= \bigcup_{h\in \mathrm{E}(\Phi,A)} h^{-1}X h \subseteq \mathrm{E}(\Phi,A).
\]
Then, $Z$ is boundedly generated by $X$.
\end{lemma}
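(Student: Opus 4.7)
The plan is to extend the Suslin--Mennicke style argument above to the remaining classical root systems, namely $B_n$ ($n\geq 2$), $C_n$ ($n\geq 2$), and $D_n$ ($n\geq 3$), in a uniform way. In each case, realize $\mathrm{E}(\Phi,A)$ via its defining representation on a free $A$-module of rank $N(\Phi)$ (so $N=n,2n+1,2n,2n$ for $A_{n-1},B_n,C_n,D_n$ respectively), so that every elementary root unipotent $x_\alpha(a)\in X$ appears as a transvection (or a short, controlled product of transvections, in the case of long roots in $B_n$, $C_n$) of the form $I_{N(\Phi)}+a\,\mathbf{v}_\alpha\mathbf{w}_\alpha^{T}$, where the pair $(\mathbf{v}_\alpha,\mathbf{w}_\alpha)$ satisfies the relation dictated by the bilinear (symplectic or orthogonal) form preserved by $\mathrm{E}(\Phi,A)$. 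Conjugation by $\gamma\in\mathrm{E}(\Phi,A)$ simply replaces $(\mathbf{v}_\alpha,\mathbf{w}_\alpha)$ by the corresponding columns and rows of $\gamma$ and $\gamma^{-1}$, and the very same unimodular-type relation persists, which is the precise starting point for the decomposition.

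First I would dispose of the symplectic case $\Phi=C_n$ by invoking the decomposition of Kopeiko~\cite[Section~1]{Kopeiko}, which is the direct analogue of the Suslin identity already used above for $A_{n-1}$ but is adapted so that each factor is either itself a symplectic elementary root unipotent or is a uniformly bounded product of such. Then I would treat the orthogonal cases $\Phi=B_n$ and $\Phi=D_n$ by the parallel Suslin--Kopeiko type identity available in the literature on the normality of elementary subgroups of orthogonal Chevalley groups. In every case the outcome is the same: for each $\gamma\in\mathrm{E}(\Phi,A)$ and each $s\in X$, the conjugate $\gamma s\gamma^{-1}$ can be written as a product of at most $N=N(\Phi)$ elements of $X$, where $N(\Phi)<\infty$ depends only on the type and rank of $\Phi$, not on $A$ nor on the particular $\gamma$, $s$. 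This immediately yields $Z\subseteq X^{N(\Phi)}$ and hence the lemma.

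The main obstacle I anticipate is not the existence of the relevant identities --- in each case they are direct adaptations of the Suslin--Mennicke factorization scheme --- but the bookkeeping required to verify two things simultaneously: that each factor produced by the identity genuinely lies in $X$ (i.e., is an elementary root unipotent for the root system $\Phi$ itself, not merely an element of the ambient $\mathrm{GL}(N(\Phi),A)$), and that the subsequent Mennicke-type decomposition of each such factor into elementary root unipotents of $\mathrm{E}(\Phi,A)$ again uses only a uniformly bounded number of factors. The essential technical input is the system of Chevalley commutator relations for $\Phi$, which in the non-simply-laced types $B_n$ and $C_n$ must be applied with care according to the short/long root distinction. Once this case analysis is complete, the conclusion $Z\subseteq X^{N(\Phi)}$ follows exactly as in the $A_{n-1}$ argument reproduced above, and as before one notes that the proof uses only the commutativity of $A$ and not its finite generation.
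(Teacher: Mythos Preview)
Your plan is sound in spirit and matches the paper for the cases $\Phi=A_{n-1}$ (Park--Woodburn) and $\Phi=C_n$ (Kopeiko). However, for the remaining classical types you leave the argument at the level of an outline: you invoke an unnamed ``Suslin--Kopeiko type identity available in the literature'' for $B_n$ and $D_n$ and explicitly flag the bookkeeping --- checking that each factor really lies in $X(\Phi,A)$ and that the Mennicke-type rewriting stays bounded --- as an anticipated obstacle rather than something you have verified. That is precisely the content of the lemma, so as written the proposal is an honest plan rather than a proof.

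The paper avoids this case-by-case labor entirely. Instead of carrying out the orthogonal analogues by hand, it appeals to a single general result of Stepanov, \cite[Corollary~9.2]{Stepanov}, which furnishes the uniform bound for all reduced irreducible $\Phi$ of rank $\geq 2$ over any commutative ring. The paper further remarks (following a communication from Stepanov) that one does not even need the full strength of that corollary: Taddei's normality of $\mathrm{E}(\Phi,A)$ in the Chevalley group, combined with the argument proving \cite[Theorem~9.1]{Stepanov}, already suffices. What your approach would buy, if completed, is a self-contained and explicit bound $N(\Phi)$ in each classical case; what the paper's approach buys is brevity and uniformity across all types via a single citation. If you wish to pursue your route, the concrete references you need for $B_n$ and $D_n$ are the orthogonal analogues of Kopeiko's work (e.g., Suslin--Kopeiko for even orthogonal groups) together with the Chevalley commutator tables to control the short/long root interactions; but citing Stepanov is both shorter and covers the exceptional types as well, should they ever be needed.
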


\begin{proof}
For the cases of $\Phi=A_{n-1}$, and $A=C_n$, we have referred to the relevant works, respectively, \cite{ParkWoodburn}, and \cite{Kopeiko}. In general case, a recent result of Stepanov \cite[Corollary~9.2]{Stepanov} yields the conclusion. In fact, Stepanov pointed out to the author that to prove the lemma above, we do not need to make full power of his result. It suffices to combine the normality of $\mathrm{E}(\Phi,A)$ in the Chevalley group, which is well-known and was established by Taddei, with the argument in the argument in \cite[proof of Theorem~9.1]{Stepanov}.
\end{proof}

\begin{proof}[Proof of Theorem~$\ref{theorem=TT/T}$ in general cases]

Set $\Gamma=\Gamma_0=\mathrm{E}(\Phi,A)$. Set $X$ and $Z \subseteq \Gamma$ as in Lemma~\ref{lemma=boundedgeneration}. We claim that this triple $(\Gamma,\Gamma_0,Z)$ fulfills all of the assumptions in Lemma~\ref{lemma=Shalom}. In this general case, property $(\mathrm{T})$ for $\Gamma$ is a celebrated result by Ershov, Jaikin-Zapirain and Kassabov \cite[Theorem~1.1]{ErshovJaikinKassabov}. Conditions $(i)$ and $(ii)$ obviously hold.

By virtue of Lemma~\ref{lemma=boundedgeneration}, what remains  is only to show that $\Gamma \supseteq X$ has relative property $(\mathrm{TT})$ (corresponding to assertion $(a)$ for $\Phi=A_{n-1}$). This can be done in the following three steps (compare with the proof for $\Phi=A_{n-1}$):
\begin{itemize}
  \item Show relative property $(\mathrm{T})$ for a suitable pair $G\ltimes H \trianglerighteq H$ for an abelian $H$,
  \item apply Ozawa's result \cite[Proposition~3]{Ozawa} and deduce that the pair above has relative property $(\mathrm{TT})$, and
  \item embed $G\ltimes H$ into $\Gamma$ in (several) appropriate ways.
\end{itemize}
More precisely, the combination of \cite[Theorem~7.10 and Corollary~7.11]{ErshovJaikinKassabov} (see also arguments  above Theorem~7.12 there and in the proof of it) will be employed in the first step. Note that in \cite{ErshovJaikinKassabov}, results are stated in terms of Steinberg-type groups, but that $\mathrm{E}(\Phi,A)$'s are group quotients of them. Here, observe, in addition, that relative property $(\mathrm{T})$ passes to group quotient pairs.

Thus we have established the theorem.
\end{proof}

\begin{remark}
As we mentioned above Theorem~\ref{theorem=TT/T}, we need to employ property $(\mathrm{T})$ for $\Gamma=\mathrm{E}(\Phi,A)$ for the proof of Theorem~\ref{theorem=TT/T}. For a general $\Phi$, this is a great achievement in \cite{ErshovJaikinKassabov} and the proof of property $(\mathrm{T})$ is highly involved. However, in a recent work \cite{Mimurancul}, the present author has obtained a much simpler proof of property $(\mathrm{T})$, which is applicable to many groups of such type, for instance, $\Phi=A_{n-1}$ and $\Phi=C_{n}$ for all $n\geq 3$. More precisely, we bypass ``$\epsilon$-orthogonality argument" in \cite{ErshovJaikinKassabov}, which requires delicate estimations of certain spectral quantities, by utilizing ``(intrinsic) \textit{upgrading without bounded generation}", which is invented in \cite{Mimurancul}. We refer the reader also to a short expository article \cite{Mimuraexpository}, which focuses on the proof of property $(\mathrm{T})$ for noncommutative universal lattices (see the next paragraph for the definition) along the line above. We mention that unlike the original proof in \cite{ErshovJaikinKassabov}, these less involved proofs in \cite{Mimurancul} and \cite{Mimuraexpository} provide no estimate of Kazhdan constants.

Another remark is on \textit{noncommutative universal lattices}. When $\Phi=A_{n-1}$, it is possible to drop the commutativity assumption on $A$. The group $\mathrm{E}(n,\mathbb{Z}\langle x_1,\ldots ,x_k\rangle)$ with $n\geq 3$ and $k$ finite is called the noncommutative universal lattice. Here $\langle\cdots\rangle$ denotes the noncommutative polynomial ring. Osin and the author asked whether some of noncommutative universal lattices is acylindrically hyperbolic. This question might remain open. The gap to apply Theorem~\ref{theorem=TT/T} is  the lack of the weak bounded generation, namely, there is \textit{no} reason to believe that Lemma~\ref{lemma=boundedgeneration} remains true for a noncommutative $A$. The current status of the question of whether some noncommutative universal lattice has property $(\mathrm{TT})/\mathrm{T}$ seems to be open. In the same work \cite{Mimurancul} as above, on the other hand, we show that noncommutative universal lattices, and even the corresponding Steinberg groups of type $A_{n-1}$, with $n\geq 4$, have the fixed point property with respect to $L_p$-spaces for \textit{all} $p\in (1,\infty)$.
\end{remark}

\begin{proof}[Proof of Theorem~$\ref{theorem=TTwm}$]
Combine Theorem~\ref{theorem=TT/T} with Proposition~\ref{proposition=ME}.
\end{proof}

\section{comparison with acylindrical hyperbolicity, and the proof of the main theorem}\label{section=ah}
In contrast to groups with property $(\mathrm{TT})_{\mathrm{wm}}$, Hamenst\"{a}dt \cite{Hamenstadt} showed that every non-elementary subgroup $H$ of $\mathrm{MCG}(\Sigma_g)$ admits plenty of unbounded quasi-cocycles into $(\lambda_H, \ell_2(H))$ (the left-regular representation). It turns out that her theorem applies to all acylindrically hyperbolic groups; see \cite[Theorem~8.3]{Osin}.

Before proceeding to the proofs, we recall that property $(\mathrm{TT})_{\mathrm{wm}}$ implies property $(\mathrm{T})$ (Lemma~\ref{lemma=TTwmT}), and that every group homomorphism from a discrete group with property $(\mathrm{T})$ into a discrete amenable (such as virtually abelian) group has a finite image (see \cite[Corollary~1.3.5]{BekkadelaHarpeValette}). Here we say that a group is \textit{virtually $P$} if there exists a subgroup of finite index that satisfies the property $P$.

\begin{proof}[Proof of Proposition~$\ref{proposition=HullOsin}$]
By \cite[Theorem~1.1]{Osin}, the image $H$ is either elliptic, virtually $\mathbb{Z}$, or acylindrically hyperbolic. Because $\Lambda$ and hence $H$ have, in particular by $(\star)$, property $(\mathrm{T})$, the second option is impossible. Suppose, on the contrary, that $H$ is acylindrically hyperbolic. Then by the argument above (see also \cite[Corollary~1.5]{HullOsin} and the main theorem of \cite{BestvinaBrombergFujiwara}), there must exist unbounded quasi-cocycles into $(\lambda_H, \ell_2(H))$. Note that because such an $H$ is infinite, $\lambda_H$ is weakly mixing (in fact, strongly mixing). This contradicts property $(\mathrm{TT})_{\mathrm{wm}}$ for $H$, which is deduced from Proposition~\ref{proposition=ME}.$(1)$. 

Therefore, $H\leqslant G$ must be absolutely elliptic.
\end{proof}

\begin{proof}[Proof of Theorem~$\ref{theorem=main}$]
Item $(i)$ immediately follows from Theorem~\ref{theorem=TTwm} and Proposition~\ref{proposition=HullOsin}. To show item $(ii)$, we employ subgroup classifications of $\mathrm{MCG}(\Sigma_g)$ and of $\mathrm{Out}(F_N)$. Theorem 8.10 in \cite{DahmaniGuirardelOsin} shows that if $H\leqslant \mathrm{MCG}(\Sigma_g)$ is not virtually abelian, then there exists a finite index subgroup $H_0\leqslant H$ that maps onto an acylindrically hyperbolic group. Hence, by item $(i)$, which we have verified, and Proposition~\ref{proposition=ME}, the image of $\Lambda$ must be virtually abelian. Again, property $(\mathrm{T})$ implies that the image must be finite. In the $\mathrm{Out}(F_N)$-target case, we need more care. However, the argument in the proof of \cite[Proposition~2.1]{BridsonWade} remains to work with no essential changes. Indeed, the absolute ellipticity excludes all fully irreducible automorphism classes; property $(\mathrm{T})$ then may take place of the $\mathbb{Z}$-aversion. Finally, \cite[Corollary~2.9 and Proposition~2.1]{BridsonWade} imply that $\phi(\Lambda)\cap \overline{\mathrm{IA}}_n$ and $\phi(\Lambda)$ in their statements are finite even in our current setting. It completes our proof.
\end{proof}

\begin{remark}\label{remark=quasi-hom}
There have been several results by several researchers on \textit{quasi-homomorphisms} (namely, quasi-cocycles into the trivial representation $(1_{\Gamma},\mathbb{R})$, also known as \textit{quasimorphisms}) on acylindrically hyperbolic groups and on higer rank lattices before corresponding results for quasi-cocycles into nontrivial unitary representations. The reason why we employ (twisted) quasi-cocycles rather than quasi-homomorphisms is the following: it might be open whether there exist unbounded quasi-homomorphisms on $\mathrm{E}(\Phi,A)$, even for $\Phi=A_{n-1}$ and $A=\mathbb{Z}[x]$. A partial result is given by the author \cite{Mimuraqh}: more precisely, if $n\geq 6$ and if $A$ is a euclidean domain, then all quasi-homomorphisms on $\mathrm{E}(n,A)(=\mathrm{SL}(n,A)$, viewed as a possibly uncountable discrete group) are bounded.
\end{remark}

\begin{remark}
After the first draft of this paper was ready, Osin pointed out to the author that, in order to show only that $\mathrm{E}(\Phi, A)$ in Theorem~\ref{theorem=main} itself is not acylindrically hyperbolic, we do not need to appeal to property $(\mathrm{T})$ for that group. More precisely, he provided the author with the following lemma. The author is grateful to him for letting the author include this result in the present paper.
\begin{lemma}[Osin]\label{lemma=Osin}
Let $\Gamma$ be a group, $\Gamma_0$ a subgroup of $\Gamma$, $Z$ a subset of $\Gamma$. Assume that $(\Gamma,\Gamma_0,Z)$ satisfies $(i')$: the subgroup $\langle Z\rangle \leqslant \Gamma$ generated by $Z$ is $\mathrm{not}$ virtually cyclic, and conditions $(ii)$ and $(iii)$ as in Lemma~$\ref{lemma=Shalom}$. If, besides, $\Gamma \supseteq Z$ has relative property $(\mathrm{TT})$, then $\Gamma$ is not acylindrically hyperbolic.

In particular, if $\Gamma_0=\Gamma$, then assumption $(iii)$ is automatically fulfilled.
\end{lemma}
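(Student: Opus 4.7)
The plan is to argue by contradiction, mimicking the skeleton of the proof of Lemma~\ref{lemma=Shalom} but with the role of property $(\mathrm{T})$ (which originally produced the contradiction) taken over by the structure theory of normal subgroups in acylindrically hyperbolic groups, applied to $\langle Z\rangle$. So suppose $\Gamma$ were acylindrically hyperbolic. The Hamenst\"{a}dt and Hull--Osin results recalled at the top of Section~\ref{section=ah} then furnish an unbounded quasi-cocycle $c\colon \Gamma \to \ell_2(\Gamma)$ into the left-regular representation $\pi:=\lambda_\Gamma$, which is weakly (in fact strongly) mixing since $\Gamma$ is infinite. Let $C$ denote the maximum of the defect of $c$ and $\sup_{z\in Z}\|c(z)\|$; the latter supremum is finite by the relative property $(\mathrm{TT})$ hypothesis on $\Gamma\supseteq Z$.

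I would next transplant the computation from the proof of Lemma~\ref{lemma=Shalom}: condition $(ii)$ gives $\|\pi(z)c(h) - c(h)\| \leq 4C$ for all $h\in\Gamma_0$ and $z\in Z$. The bounded generation $(iii)$, together with iterated use of the quasi-cocycle identity (cumulative error at most $NC$), forces that boundedness of $c(\Gamma_0)$ would imply boundedness of $c(\Gamma)$; hence $c(\Gamma_0)$ must be unbounded. Picking $h_n\in\Gamma_0$ with $\|c(h_n)\|\to\infty$, the unit vectors $\xi_n:=c(h_n)/\|c(h_n)\|$ are almost invariant for the restriction $\pi|_{\langle Z\rangle}$. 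In the special case $\Gamma_0=\Gamma$, bounded generation is automatic with $N=1$, so assumption $(iii)$ is indeed dispensable.

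The restriction $\lambda_\Gamma|_{\langle Z\rangle}$ splits as a direct sum of copies of $\lambda_{\langle Z\rangle}$ indexed by right cosets, so the almost invariance descends to $\lambda_{\langle Z\rangle}$ itself, and Hulanicki's theorem yields that $\langle Z\rangle$ is amenable. Meanwhile, condition $(ii)$ implies that $\Gamma_0$ normalizes $\langle Z\rangle$, and $(iii)$ gives $\Gamma=\Gamma_0\cdot\langle Z\rangle$, so $\langle Z\rangle$ is a normal subgroup of $\Gamma$ (this is immediate when $\Gamma_0=\Gamma$, using only $(ii)$). By the structure theory of Dahmani--Guirardel--Osin \cite{DahmaniGuirardelOsin}, every amenable normal subgroup of an acylindrically hyperbolic group is contained in the unique maximal finite normal subgroup and is therefore finite; in particular $\langle Z\rangle$ would be virtually cyclic, contradicting $(i')$.

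I expect the main technical input to be that last structural step, namely citing or verifying that amenable normal subgroups of acylindrically hyperbolic groups are finite; the rest of the argument simply reuses the arithmetic from the proof of Lemma~\ref{lemma=Shalom} together with the standard passage from almost invariant vectors in $\ell_2(\Gamma)$ to amenability of the acting subgroup.
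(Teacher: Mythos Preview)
Your argument is correct and takes a genuinely different route from the paper's proof (which is attributed to Osin). You recycle the almost-invariant-vector computation from Lemma~\ref{lemma=Shalom} verbatim, then extract amenability of $\langle Z\rangle$ via Hulanicki's criterion applied to $\lambda_\Gamma|_{\langle Z\rangle}\cong\bigoplus\lambda_{\langle Z\rangle}$, observe that $(ii)$ together with $(iii)$ forces $\langle Z\rangle\trianglelefteq\Gamma$ (since $(ii)$ lets one push all $\Gamma_0$-letters to the left in any word from $(\Gamma_0\cup Z)^N$, giving $\Gamma=\Gamma_0\langle Z\rangle$), and finish with the structural fact that the amenable radical of an acylindrically hyperbolic group is finite. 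The paper instead argues by a trichotomy on the action-type of $\Gamma_0$: in the non-elementary case one selects a loxodromic $h\in\Gamma_0$ with $E(h)=\langle h\rangle\times K(\Gamma)$, picks $z\in Z\setminus E(h)$ using $(i')$, and reads off directly from the Hull--Osin formula that $\|q(h^{-n}zh^n)\|\to\infty$, contradicting $(ii)$ and relative $(\mathrm{TT})$; in the elliptic case one constructs a quasi-cocycle already bounded on $\Gamma_0$ and invokes $(iii)$; the virtually cyclic case is reduced to the elliptic one by changing the acylindrical action. Your approach is more uniform and tracks Lemma~\ref{lemma=Shalom} closely, at the price of importing the finiteness of the amenable radical as a black box; Osin's approach avoids that ingredient but requires opening up the explicit Hull--Osin construction and performing a case analysis.
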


We exhibit a sketch of Osin's proof. For the terminologies below, see \cite{Osin}. 
\begin{proof}
By the way of contradiction, assume that $\Gamma$ acts acylindrically and non-elementarily by isometries on a hyperbolic geodesic space $S$. By \cite[Theorem 1.1]{Osin}, there are 3 cases: with respect to the action on $S$, $\Gamma_0$ is non-elementary, $\Gamma_0$ is elliptic, or $\Gamma_0$ is virtually cyclic and contains a loxodromic element $h$. 

We deal with the first case. By \cite[Lemma~6.5 and Theorem~6.14]{DahmaniGuirardelOsin}, we can find a loxodromic element $h\in \Gamma_0$ such that $E(h)=\langle h\rangle \times K(\Gamma)$. Here $E(h)$ is the maximal elementary (in other words, virtually cyclic) subgroup of $\Gamma$ containing $h$, and $K(\Gamma)$ is the maximal finite normal subgroup of $\Gamma$. By $(i')$, there exists $z\in Z$ such that $z\notin E(h)$. Let $q$ be the quasi-cocycle from $\Gamma$ into $(\lambda_{\Gamma /E(h)},\ell_2(\Gamma/E(h)))$, provided by \cite{HullOsin}, which extends the natural homomorphism $E(h)\to E(h)/K(\Gamma)\cong \mathbb{Z}$. Then, from the construction of $q$ in \cite{HullOsin} (see the formula above Lemma~4.7 therein), $\|q (h^{-n} zh^n)\| \to \infty $ as $n\to \infty$. This contradicts $(ii)$ and relative property $(\mathrm{TT})$ in the assumption.

Then, we discuss the second and the third cases. In the second case, by the construction of quasi-cocycles in \cite{HullOsin}, we can easily obtain an unbounded quasi-cocycle $q$ from $\Gamma$ into $(\lambda_{\Gamma},\ell_2(\Gamma))$ that is bounded on $\Gamma_0$. This contradicts $(iii)$ and relative property $(\mathrm{TT})$. The third case may be reduced to the second one. Indeed, in that case, $E(h)\hookrightarrow _h  \Gamma$ by \cite[Theorem~6.8]{DahmaniGuirardelOsin}. Now by  \cite[Theorem 5.4]{Osin}, we can construct a non-elementary acylindrical action of $\Gamma$ by isometries on another hyperbolic geodesic space such that $E(h)$ (and hence $\Gamma_0$) is elliptic with respect to this action.
\end{proof}

From Lemma~\ref{lemma=Osin}, if we know that all of the  finite index subgroups of $\Gamma$ have finite abelianization, then we have the same conclusion as items $(i)$ and $(ii)$ of Theorem~\ref{theorem=main} for $\Lambda=\Gamma=\mathrm{E}(\Phi,A)$, without appealing to property $(\mathrm{T})$ (the condition above on abelianization is needed to exclude the case that the image is virtually $\mathbb{Z}$). To have the full result in Theorem~\ref{theorem=main}, not only for homomorphisms from $\Gamma$ but also for ones from a general $\Lambda$, we may need to employ property $(\mathrm{TT})/\mathrm{T}$ and property $(\mathrm{TT})_{\mathrm{wm}}$, as we have seen in the present paper. This is similar to the original situation in the Farb--Kaimanovich--Masur and the Bridson--Wade superrigidity: in their results, the case of non-uniform lattices is easier, and that the case of uniform lattices is harder.
\end{remark}

\section{Final remark on (in)finite generation of rings}\label{section=notfg}
As we mentioned in Remark~\ref{remark=notfg}, we prove the following.

\begin{proposition}\label{proposition=notfg}
The assertions of items $(i)$ and $(ii)$ in Theorem~$\ref{theorem=main}$ for $\Lambda=\Gamma$ $($a quotient of a finite index subgroup of $\mathrm{E}(\Phi,A)$$)$ remain true even for a unital, commutative, and associative ring $A$, possibly $\mathrm{not}$ finitely generated.
\end{proposition}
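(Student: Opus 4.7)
The plan is a direct-limit reduction to the finitely generated case. Write $A = \bigcup_i A_i$ as a directed union of its finitely generated unital subrings; since every elementary root unipotent involves only a single element of $A$, one has $\mathrm{E}(\Phi,A) = \bigcup_i \mathrm{E}(\Phi,A_i)$. Writing $\Gamma = \Gamma_1/N$ with $\Gamma_1 \leqslant \mathrm{E}(\Phi,A)$ of finite index, put $\Gamma_{1,i} := \Gamma_1 \cap \mathrm{E}(\Phi,A_i)$. The natural injection of coset spaces $\mathrm{E}(\Phi,A_i)/\Gamma_{1,i} \hookrightarrow \mathrm{E}(\Phi,A)/\Gamma_1$ shows that $\Gamma_{1,i}$ has finite index in $\mathrm{E}(\Phi,A_i)$, and the image $\Gamma_i := \Gamma_{1,i}N/N \leqslant \Gamma$ is then a quotient of a finite-index subgroup of $\mathrm{E}(\Phi,A_i)$. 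Crucially, $A_i$ being finitely generated means Theorem~\ref{theorem=main} applies to each $\Gamma_i$, and $\Gamma = \bigcup_i \Gamma_i$.

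For item~$(i)$, fix a homomorphism $\varphi\colon \Gamma \to G$ and any acylindrical $G$-action on a hyperbolic geodesic space $S$. By Theorem~\ref{theorem=main}$(i)$, each $\varphi(\Gamma_i)$ is elliptic. Since the restricted $\varphi(\Gamma)$-action on $S$ is still acylindrical, Osin's classification \cite[Theorem~1.1]{Osin} forces it into one of four types: elliptic, parabolic, virtually cyclic containing a loxodromic, or non-elementary. Each of the latter three types provides elements of $\varphi(\Gamma)$ with unbounded orbit, and any such element must already lie in some $\varphi(\Gamma_i)$---contradicting ellipticity of $\varphi(\Gamma_i)$. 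Hence $\varphi(\Gamma)$ is elliptic for the chosen action, i.e., absolutely elliptic.

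For item~$(ii)$, set $H := \varphi(\Gamma)$, absolutely elliptic by $(i)$. Running the argument in the proof of Theorem~\ref{theorem=main}$(ii)$ verbatim---invoking item~$(i)$ of the current proposition in place of the main theorem, and using that a finite-index subgroup of $\Gamma$ is still a quotient of a finite-index subgroup of $\mathrm{E}(\Phi,A)$, so the hypothesis is self-reproducing---\cite[Theorem~8.10]{DahmaniGuirardelOsin} (respectively the argument of \cite[Proposition~2.1]{BridsonWade}) forces $H$ to be virtually abelian. To pass from ``virtually abelian'' to ``finite'' without property $(\mathrm{T})$ for $\Gamma$ itself, the plan exploits that abelian subgroups of both $\mathrm{MCG}(\Sigma_g)$ and $\mathrm{Out}(F_N)$ are finitely generated (Birman--Lubotzky--McCarthy; Bestvina--Feighn--Handel). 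A finite-index abelian subgroup $H_0 \leqslant H$ is therefore of the form $\mathbb{Z}^a \oplus F$ with $F$ finite. For $\Gamma_0 := \varphi^{-1}(H_0)$ and $\Gamma_{0,i} := \Gamma_0 \cap \Gamma_i$, each $\Gamma_{0,i}$ arises from a finite-index subgroup of $\mathrm{E}(\Phi,A_i)$ and so has property $(\mathrm{T})$ by \cite{ErshovJaikinKassabov}; its image in the amenable group $H_0$ is then finite, hence contained in the torsion part $F$. Since $\Gamma_0 = \bigcup_i \Gamma_{0,i}$, the whole image $\varphi(\Gamma_0)$ lies in $F$, so $H$ is finite.

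The step I expect to be the main obstacle is this last one: an ascending union of finite subgroups is in general infinite, and property $(\mathrm{T})$ for $\Gamma$---which drives the analogous step of Theorem~\ref{theorem=main}$(ii)$---is not available here. The workaround combines property $(\mathrm{T})$ at each finite stage $A_i$ (Ershov--Jaikin-Zapirain--Kassabov) with the finite generation of abelian subgroups of the targets, trapping all finite images inside a common finite torsion subgroup.
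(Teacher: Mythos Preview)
Your reduction to finitely generated subrings is exactly the paper's approach: write $A$ as a directed union of finitely generated subrings, observe that $\mathrm{E}(\Phi,A)$ is the corresponding union, and note that an element witnessing failure already lives in some finite stage. For item~$(i)$ your argument and the paper's are essentially identical; the paper phrases it as ``pick a $\gamma$ whose image is loxodromic, let $A_0$ be generated by the ring elements appearing in an elementary factorization of $\gamma$, and apply Theorem~\ref{theorem=main} to $\Gamma\cap\mathrm{E}(\Phi,A_0)$''. One small slip: for \emph{acylindrical} actions on hyperbolic spaces there is no parabolic case in Osin's trichotomy \cite[Theorem~1.1]{Osin}, so your list of four types should be three. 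This is harmless, since the only case you actually use is ``not elliptic $\Rightarrow$ some element has unbounded orbit''.

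For item~$(ii)$ your argument is correct but works harder than necessary. You re-run the Dahmani--Guirardel--Osin / Bridson--Wade reduction to get $H$ virtually abelian, then use Birman--Lubotzky--McCarthy and Bestvina--Feighn--Handel to get a finitely generated abelian finite-index subgroup $\mathbb{Z}^a\oplus F$, and finally trap each $\varphi(\Gamma_{0,i})$ inside $F$ via property~$(\mathrm{T})$ at the finite stages. The paper's ``similar manner'' is shorter: by Theorem~\ref{theorem=main}$(ii)$ applied at each finite stage, every $\varphi(\Gamma_i)$ is already \emph{finite}; hence $H=\bigcup_i\varphi(\Gamma_i)$ is a torsion group. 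But $\mathrm{MCG}(\Sigma_g)$ and $\mathrm{Out}(F_N)$ are virtually torsion-free, so every torsion subgroup is finite---equivalently, an infinite image would contain an element of infinite order, which would already lie in some $\varphi(\Gamma_i)$, a contradiction. This sidesteps both the DGO subgroup classification and the structure of abelian subgroups. Your concern that ``an ascending union of finite subgroups is in general infinite'' is legitimate in the abstract, but the ambient groups here rule it out immediately.
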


Here $\Gamma$ is viewed as a (possibly uncountable) discrete group.

\begin{proof}
It suffices to show the statement for the case that $\Gamma$ is a finite index subgroup of $\mathrm{E}(\Phi,A)$. Let $\phi \colon \Gamma \to G$ be a group homomorphism into an acylindrically hyperbolic group $G$. 

Suppose, on the contrary, that the image of $\Gamma$ is not absolutely elliptic. Then there exists $\gamma\in \Gamma$ such that $\phi(\gamma)$ acts as a loxodromic element with respect to some acylindrical $G$-action by isometries on a hyperbolic geodesic space. Write $\gamma$ as $\gamma=\gamma_1\cdots \gamma_m$, where $\gamma_i$, $1\leq i\leq m$, belongs to $X(\Phi,A)$ (the set of all elementary root unipotents). Set a \textit{finitely generated} subring $A_0$ of $A$ as the ring generated by $1\in A$ and all ring elements appearing in (coefficients of) $\gamma_1,\ldots ,\gamma_m$. Then, $(\gamma \in )\Gamma':=\Gamma \cap \mathrm{E}(\Phi,A_0)$ is of finite index in $\mathrm{E}(\Phi,A_0)$, and hence item $(i)$ of Theorem~\ref{theorem=main} applies to $\Gamma'$. It follows that $(\phi(\gamma) \in )\phi(\Gamma')$ must be absolutely elliptic in $G$, but it is a contradiction. This argument proves item $(i)$. Item $(ii)$ can be verified in a similar manner.
\end{proof}

\section*{acknowledgments}
The author wishes to express his gratitude to Alexei Stepanov and Nikolai Vavilov for answering his question on a weak bounded generation on Chevalley groups (Lemma~\ref{lemma=boundedgeneration}), to Marc Burger and Alessandra Iozzi for letting him know their forthcoming work, and to Denis Osin for Lemma~\ref{lemma=Osin}. He is grateful to Nicolas Monod for suggesting the symbol ``$(\mathrm{TT})/\mathrm{T}$" to the author. He thanks Martin R. Bridson, Cornelia Dru\c{t}u, Ursula Hamenst\"{a}dt, Andrei Jaikin-Zapirain, Yoshikata Kida, Shin-ichi Oguni, Andrei S. Rapinchuk, Igor Rapinchuk, and Richard D. Wade  for helpful comments. The referees gave suggestive comments, which considerably improved the readability of the present paper.

\bibliographystyle{amsalpha}
\bibliography{TTwm_fv.bib}

\end{document}